\pgfplotsset{compat=1.18}
\title[]{FABIO-GLADIS-PABLO}
\author{}
\date{}
\theoremstyle{plain}
   \newtheorem{teo}{Theorem}
   \newtheorem{lema}[teo]{Lemma}
   \newtheorem{propo}[teo]{Proposition}
\theoremstyle{definition}
\theoremstyle{remark}
 \newtheorem{afirmacion}{Claim}
\numberwithin{equation}{section}
\numberwithin{teo}{section}
\definecolor{aquamarine}{rgb}{0.5, 1.0, 0.83}
\definecolor{americanrose}{rgb}{1.0, 0.01, 0.24}
\definecolor{arsenic}{rgb}{0.23, 0.27, 0.29}
\definecolor{blizzardblue}{rgb}{0.67, 0.9, 0.93}
\definecolor{blush}{rgb}{0.87, 0.36, 0.51}
\definecolor{celestialblue}{rgb}{0.29, 0.59, 0.82}
\definecolor{chocolate(web)}{rgb}{0.82, 0.41, 0.12}
\definecolor{brightpink}{rgb}{1,0,0.5}
\definecolor{cadmiunred}{rgb}{0.89,0,0.13}
\begin{document}

    \title[Sawyer estimates of mixed type]{Sawyer estimates of mixed type for operators associated to a critical radius function}

\author[F. Berra]{Fabio Berra}
\address{CONICET and Departamento de Matem\'{a}tica (FIQ-UNL),  Santa Fe, Argentina.}
\email{fberra@santafe-conicet.gov.ar}

\author[G. Pradolini]{Gladis Pradolini}
\address{CONICET and Departamento de Matem\'{a}tica (FIQ-UNL),  Santa Fe, Argentina.}
\email{gladis.pradolini@gmail.com}
%
\author[P. Quijano]{Pablo Quijano}
\address{Instituto de Matem\'atica Aplicada del Litoral, CONICET-UNL, and Facultad de Ingenier\'ia Qu\'imica, UNL. \indent Colectora Ruta Nac. N 168, Paraje El Pozo.
3000 Santa Fe, Argentina.}
\email{pquijano@santafe-conicet.gov.ar}

\thanks{The authors were supported by CONICET, ANPCyT and UNL}

\subjclass[2020]{42B20, 42B25, 35J10}

\keywords{Schrödinger Operators, Muckenhoupt weights, critical radius functions}

\maketitle

\begin{abstract}
We prove mixed inequalities for the Hardy-Littlewood maximal function $M^{\rho,\sigma}$, where $\rho$ is a critical radius function and $\sigma\geq 0$. We also exhibit and prove an extension of Cruz-Uribe, Martell and Pérez extrapolation result in \cite{CruzUribe-Martell-Perez} to the setting of Muckenhoupt weights associated to a critical radius function $\rho$. This theorem allows us to give mixed inequalities for Schrödinger-Calderón-Zygmund operators, extending some previous estimates that we have already proved in \cite{BPQ}. Since we are dealing with unrelated weights, the proof involves a quite subtle argument related with the original ideas from Sawyer in \cite{Sawyer}.
\end{abstract}

\section{Introduction}

Perhaps the key idea behind the proof of mixed inequalities arose in \cite{M-W-76}, where Muckenhoupt and Wheeden proved weighted weak type inequalities involving $A_p$ weights. Concretely, they proved that the estimate
\[\left|\left\{x\in \mathbb{R}: \mathcal{T}f(x)w^{1/p}(x)>t\right\}\right|\leq \frac{C}{t^p}\int_{\mathbb{R}}|f|^pw\]
holds for $p\geq 1$ and $w\in A_p$, where the operator $\mathcal{T}$ is either the Hardy-Littlewood  maximal operator or the maximal Hilbert transform. The proof relies upon the so-called corona decomposition, which involves the construction of intervals on the real line with certain good properties, called ``principal intervals''.

Some years later, by adapting the corona technique, Sawyer proved that the inequality
\begin{equation}\label{eq: mixta de Sawyer}
uv\left(\left\{x\in\mathbb{R}: \frac{M(fv)(x)}{v(x)}>t\right\}\right)\leq \frac{C}{t}\int_{\mathbb{R}}|f|uv    
\end{equation}
holds for $u,v\in A_1$ and every positive $t$. A motivation to study this estimate was to obtain a different proof of the $L^p$ boundedness of the Hardy-Littlewood maximal function for $A_p$ weights, given originally by Muckenhoupt in \cite{Muck72}. Although the estimate above generalizes the well-know weak $(1,1)$ type for $M$, the operator involved in \eqref{eq: mixta de Sawyer} is a perturbation of $M$ by the $A_1$ weight $v$. This modification notably changes the operator, and different techniques combined with the one mentioned above are required for the purpose to get the mentioned inequality.  

Many extensions and improvements of this inequality have been made on the last decades. In \cite{CruzUribe-Martell-Perez} the authors extended \eqref{eq: mixta de Sawyer} to higher dimensions including Calderón-Zygmund operators (CZO). The key idea was to first prove the mixed inequality for the dyadic Hardy-Littlewood maximal function and then obtain the same for $M$ and CZO through an extrapolation result on the scale of Lorentz spaces. The latter required $u\in A_1$ and $v\in A_\infty$ to hold, condition that made the authors conjecture that the proved mixed inequalities might also be true in this case. This conjecture was solved a few years ago in \cite{L-O-P}.

On the other hand, mixed inequalities have also been established for commutators of CZO with BMO symbol in \cite{Berra-Carena-Pradolini(M)}. The condition on the weigths, which had also been previously considered by the authors in \cite{CruzUribe-Martell-Perez}, involved a pair $u,v$ such that $u\in A_1$ and $v\in A_\infty(u)$. It is not difficult to see that this condition implies that the product $uv$ belongs to $A_\infty$. Under this assumption, many classical techniques of Harmonic Analysis, like Calderón-Zygmund decomposition, can be adapted in order to achieve the desired estimate. 

Mixed estimates were also studied on the fractional setting. In \cite{Berra-Carena-Pradolini(J)} the authors proved this type of inequalities for the fractional maximal operator $M_\gamma$ and for the fractional integral operator $I_\gamma$, where $0<\gamma<n$. The conditions on the weights include both the unrelated weights as well as the corresponding fractional version of the related condition stated above. As an application, mixed estimates for commutators with Lipschitz symbol were given. 

For generalized maximal and fractional related operators, mixed estimates have been obtained in \cite{Berra}, \cite{Berra-Carena-Pradolini(MN)}, and \cite{B22Pot}.

Regarding operators involving a critial radius function $\rho$, mixed estimates were obtained in \cite{BPQ} by considering related weights adapted to this setting. The inequalities included a variant of the Hardy-Littlewood maximal operator and certain type of CZO in this context. As an application, mixed estimates for operators on the Schrödinger setting were also established. The advantage of considering related weights is that many classical tools, like Calderón-Zygmund decomposition, can be performed in the new setting in order to get the main estimates. 

As far as we know, there are no preceding results of the type described above in the case of unrelated weights like those considered in \cite{CruzUribe-Martell-Perez} or \cite{L-O-P} adapted to Schrödinger operators. The purpose of this article is precisely to settle these type of problems. As well as it was done on the classical setting, we prove an extrapolation result that allows us to reduce the problem to the study of an adequate dyadic maximal operator. This extrapolation theorem deals with weights belonging to the $A_p^\rho$ Muckenhoupt classes considered in this context. Let us introduce the basics for our development. 

We shall consider the space $\mathbb{R}^d$ equipped with a \emph{critical radius function} $\rho\colon \mathbb{R}^d\to (0,\infty)$ whose variation is controlled by the existence of two constants $C_0$ and $N_0\geq 1$ such that the inequality
	\begin{equation} \label{eq: constantesRho}
	C_0^{-1}\rho(x) \left(1+ \frac{|x-y|}{\rho(x)}\right)^{-N_0}
	\leq \rho(y)
	\leq C_0 \,\rho(x) \left(1+ \frac{|x-y|}{\rho(x)}\right)^{\tfrac{N_0}{N_0+1}}
	\end{equation}
	holds for every $x,y\in\mathbb{R}^d$.

The classes of Muckenhoupt weights associated to this critical radius function $\rho$ were introduced in \cite{BHS-classesofweights} (see Section~\ref{seccion: preliminares} for details). 
	
 The following extrapolation result will be useful to derive mixed inequalities for operators associated to a critical radius function, by means of the corresponding estimate for the Hardy-Littlewood maximal operator in this framework.
 
\begin{teo}\label{thm: extrapolacion}
    Let $\mathcal{F}$ be a family of pairs of functions satisfying that there exists $0<p_0<\infty$ such that the inequality
    \begin{equation}\label{eq: thm: extrapolacion - desigualdad de Coifman}
      \int_{\mathbb{R}^d}|f(x)|^{p_0}w(x)\,dx\leq C\int_{\mathbb{R}^d}|g(x)|^{p_0}w(x)\,dx  
    \end{equation}
    holds for every $w\in A_\infty^\rho$, for every pair $(f,g)\in \mathcal{F}$ such that the left-hand side is finite and with $C$ depending only on $[w]_{A_\infty^\rho}$. Then the inequality
    \[\left\|\frac{f}{v}\right\|_{L^{1,\infty}(uv)}\leq C \left\|\frac{g}{v}\right\|_{L^{1,\infty}(uv)}\]
    holds for every $u\in A_1^\rho$ and  $v\in A_\infty^\rho$.
\end{teo}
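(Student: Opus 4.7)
The plan is to adapt the Rubio de Francia extrapolation scheme of Cruz--Uribe, Martell and P\'erez \cite{CruzUribe-Martell-Perez} to the critical-radius setting, systematically replacing the Euclidean objects ($M$, $A_p$, Sawyer's inequality \eqref{eq: mixta de Sawyer}) by their $\rho$-analogues ($M^{\rho,0}$, $A_p^\rho$, and the mixed weak-type estimate for $M^{\rho,0}$ established earlier in the paper). The structural properties required by the extrapolation (openness of the $A_p^\rho$ classes in $p$, the Rubio de Francia iteration producing $A_1^\rho$ weights, and suitable boundedness of $M^{\rho,0}$) will follow from the control of the variation of $\rho$ encoded in \eqref{eq: constantesRho}.

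After passing to $f,g\geq 0$ and normalizing so that $\|g/v\|_{L^{1,\infty}(uv)}=1$ by homogeneity, I fix $t>0$ and set $E_t=\{f>tv\}$. A standard truncation of $f$ reduces matters to $uv(E_t)<\infty$, so the goal becomes $uv(E_t)\leq C/t$. The starting point is the pointwise inequality $(tv)^{p_0}\chi_{E_t}\leq f^{p_0}$, which, combined with the hypothesis \eqref{eq: thm: extrapolacion - desigualdad de Coifman}, yields
\begin{equation*}
t^{p_0}\int_{E_t} v^{p_0} w\,dx \leq C\int g^{p_0} w\,dx
\end{equation*}
for every $w\in A_\infty^\rho$ meeting the finiteness required by the hypothesis. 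The rest of the proof is a balancing act: I must choose $w\in A_\infty^\rho$ so that the left-hand side dominates $t^{p_0}\,uv(E_t)$ (forcing $w\gtrsim uv^{1-p_0}$ on $E_t$) while the right-hand side is controlled by a constant multiple of $t^{p_0-1}$.

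The natural candidate $w=uv^{1-p_0}\chi_{E_t}$ is not in general a Muckenhoupt weight, so I would construct the true $w$ via a Rubio de Francia iteration
\[
R h=\sum_{k=0}^\infty \frac{(M^{\rho,0})^k h}{(2\,\|M^{\rho,0}\|_{X\to X})^k},
\]
applied to a suitable base $h$ in a Banach function space $X$ (tuned to $u$, $v$ and $p_0$) on which $M^{\rho,0}$ is bounded. By construction $Rh\in A_1^\rho\subset A_\infty^\rho$, $Rh\geq h$ pointwise, and $\|Rh\|_X\leq 2\|h\|_X$. The right-hand side $\int g^{p_0}\,Rh=\int(g/v)^{p_0}\,v^{p_0}\,Rh$ is then estimated using H\"older's inequality in $X$ together with a Kolmogorov-type lemma that trades the $L^{p_0}$ integral of $g/v$ (against an auxiliary measure built from $Rh$) for the Lorentz norm $\|g/v\|_{L^{1,\infty}(uv)}$; combining the two sides gives the desired $uv(E_t)\leq C/t$.

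The main technical obstacle will be to pin down the correct space $X$ and base function $h$ so that the Rubio de Francia iteration converges and H\"older closes the estimate with the right powers of $t$; some casework is expected according to whether $p_0<1$, $p_0=1$, or $p_0>1$. The decisive analytic ingredient beyond the general RdF philosophy is the mixed weak-type bound for $M^{\rho,0}$, which plays the role of Sawyer's classical estimate \eqref{eq: mixta de Sawyer} and which is precisely what provides the boundedness of $M^{\rho,0}$ on the weighted Lorentz-type space $X$ needed for the iteration to produce an $A_1^\rho$ weight.
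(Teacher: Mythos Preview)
Your plan rests on a misconception about where the Sawyer-type inequality sits in the Cruz-Uribe--Martell--P\'erez scheme. In \cite{CruzUribe-Martell-Perez} the extrapolation theorem is proved \emph{without} invoking the mixed weak-type inequality for $M$; that inequality is a separate result, and the two are only combined afterwards to pass from $M$ to Calder\'on--Zygmund operators. The present paper follows exactly the same logic: the proof of Theorem~\ref{thm: extrapolacion} never appeals to Theorem~\ref{thm: mixta para M}. The actual engine is the \emph{strong} weighted boundedness of $M^{\rho,\sigma}$ on $L^{p}(w)$ for $w\in A_p^\rho$. Concretely, one works with the auxiliary operator $S_{\rho,\sigma}f=M^{\rho,\sigma}(fu)/u$, shows it is bounded on $L^\infty(uv)$ (trivially from $u\in A_1^\rho$) and on some $L^{p_0}(uv)$ (by proving $u^{1-p_0}v\in A_{p_0}^\rho$ via the factorization Lemma~\ref{lema: propiedades clase Ap,rho} and Lemma~\ref{lema: producto de un peso por potencia epsilon de otro en A_1}), interpolates to obtain $S_{\rho,\sigma}:L^{q,1}(uv)\to L^{q,1}(uv)$ uniformly for $q\ge 2p_0$, runs the Rubio de Francia algorithm on $L^{r',1}(uv)$, and closes via the duality $(L^{r,\infty}(uv))^*\supset L^{r',1}(uv)$ applied to $(f/v)^{1/r}$.

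As written, your level-set route has a genuine gap: you never specify the Banach space $X$ or the seed $h$, and the ingredient you single out does not supply what the Rubio de Francia iteration needs. Theorem~\ref{thm: mixta para M} gives only a weak-type $(1,1)$ bound for the nonlinear map $f\mapsto M^{\rho,\sigma}(fv)/v$ with respect to $uv$; this is not a strong bound for $M^{\rho,\sigma}$ (or $M^{\rho,0}$) on any Banach lattice on which one could iterate, so it does not by itself produce an $A_1^\rho$ majorant. Note also that Theorem~\ref{thm: mixta para M} is stated for $M^{\rho,\sigma}$ with some $\sigma\ge 0$ depending on $u,v$, not for $M^{\rho,0}=M$; and even if you iterated $M^{\rho,\sigma}$ to get $\mathcal{R}h\in A_1^{\rho,\sigma}$, you would still need Lemma~\ref{lema: producto de un peso por potencia epsilon de otro en A_1} to place $(\mathcal{R}h)\cdot(\text{power of }v)$ inside $A_\infty^\rho$ and make the hypothesis applicable---at which point the mixed inequality has bought you nothing that the strong-type input of the paper's argument did not already provide.
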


For the classical version of this theorem see \cite{CruzUribe-Martell-Perez}.

\bigskip

Our first main result deals with the Hardy-Littlewood maximal operator in the setting of critical radius functions given, for $\sigma\geq 0$, by
\begin{equation*}
M^{\rho,\sigma}f(x)=\sup_{Q(x_0,r_0)\ni x} \left(1+\frac{r_0}{\rho(x_0)}\right)^{-\sigma}\left(\frac{1}{|Q|}\int_Q |f(y)|\,dy\right),
\end{equation*}
where $Q(x_0,r_0)$ is a cube in $\mathbb{R}^d$ with center $x_0$ and radius $r_0$. We also consider unrelated weights.

\begin{teo}\label{thm: mixta para M}
	Let $u\in A_1^{\rho}$ and $v\in A_\infty^\rho$. Then there exists $\sigma\geq0$ and $C>0$ such that the inequality 
	\begin{equation*}
	uv\left(\left\{x\in\mathbb{R}^d: \frac{M^{\rho,\sigma}(fv)(x)}{v(x)}>t\right\}\right)\leq \frac{C}{t}\int_{\mathbb{R}^d} |f(x)|u(x)v(x)\,dx
	\end{equation*}
	holds for every positive $t$.
\end{teo}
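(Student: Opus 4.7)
The plan is to prove the estimate directly, by reducing to a dyadic version of the operator and then running Sawyer's principal-cubes construction adapted to the $\rho$-setting. Invoking Theorem~\ref{thm: extrapolacion} directly with the pairs $(M^{\rho,\sigma}f,f)$ is not an option here: it would require \eqref{eq: thm: extrapolacion - desigualdad de Coifman} to hold uniformly on $A_\infty^\rho$, whereas the $L^{p_0}(w)$-boundedness of $M^{\rho,\sigma}$ itself only holds under the stronger hypothesis $w\in A_{p_0}^\rho$.

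First, I would introduce for each dyadic grid $\mathcal{D}$ the dyadic operator
\[
M^{\rho,\sigma,\mathcal{D}}f(x)=\sup_{\substack{Q\in\mathcal{D}\\ Q\ni x}}\Bigl(1+\tfrac{r_Q}{\rho(x_Q)}\Bigr)^{-\sigma}\frac{1}{|Q|}\int_Q|f|,
\]
and then use finitely many translated grids combined with \eqref{eq: constantesRho} to compare critical radii at the centers. This yields a pointwise dominance of the form $M^{\rho,\sigma}f\lesssim\sum_j M^{\rho,\sigma',\mathcal{D}_j}f$ for some $\sigma'\ge\sigma$, reducing the theorem to the mixed inequality for a single dyadic operator $M^{\rho,\sigma',\mathcal{D}}$.

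By homogeneity, I assume $t=1$ and $f\ge0$, and let $\{Q_j\}$ be the maximal dyadic cubes satisfying $(1+r_{Q_j}/\rho(x_{Q_j}))^{-\sigma'}|Q_j|^{-1}\int_{Q_j}fv>1$. Up to a $uv$-null set, $\{M^{\rho,\sigma',\mathcal{D}}(fv)/v>1\}\subseteq\bigcup_j Q_j$, so the goal is to prove
\[
\sum_j(uv)(Q_j)\le C\int f\,uv.
\]
On this family I would build a Sawyer corona by recursively selecting principal cubes $\{P_k\}$ where the modulated average of $fv$ doubles its value on its principal ancestor, producing pairwise disjoint sets $E_k\subseteq P_k$ with $|E_k|\gtrsim|P_k|$ and an ancestor map $\pi(Q_j)=P_{k(j)}$. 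For each $P=P_k$, I would estimate $\sum_{\pi(Q_j)=P}(uv)(Q_j)$ by combining (i) a $\rho$-adapted reverse Hölder inequality provided by $v\in A_\infty^\rho$ to pass from $v$-measures to Lebesgue measures with constants depending only on $[v]_{A_\infty^\rho}$; (ii) the $A_1^\rho$ bound $u(Q)\le C(1+r_Q/\rho(x_Q))^{N}|Q|\inf_Q u$; and (iii) the geometric growth of averages enforced by the corona. Summing over $k$ with the disjointness of the $E_k$ then yields the desired bound by $C\int f\,uv$.

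The main technical obstacle is the scale-dependent character of $A_p^\rho$. For cubes with $r_Q\le\rho(x_Q)$ the chain of estimates mimics the classical Sawyer argument, but for cubes with $r_Q>\rho(x_Q)$, positive powers of $1+r_Q/\rho(x_Q)$ accrue both from the $A_1^\rho$ condition on $u$ and from the reverse Hölder inequality for $v$. These factors must be absorbed by the modulation $(1+r_Q/\rho(x_Q))^{-\sigma'}$ built into $M^{\rho,\sigma',\mathcal{D}}$, which dictates how large $\sigma$ must be chosen in terms of $[u]_{A_1^\rho}$, $[v]_{A_\infty^\rho}$, and the structural constants $C_0,N_0$ from \eqref{eq: constantesRho}. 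Managing this accounting uniformly across all principal generations of the corona is where the bulk of the work lies.
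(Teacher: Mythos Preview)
Your reduction to a dyadic operator via shifted grids is reasonable, but the corona you outline has a real gap. The assertion that it suffices to prove $\sum_j (uv)(Q_j)\le C\int f\,uv$ for the maximal dyadic cubes $Q_j$ is too strong and in general false: on the part of $Q_j$ where $v$ is large one can have $M(fv)/v\le 1$, so that part is not in the level set, yet it may make $(uv)(Q_j)$ arbitrarily large compared with $\int_{Q_j}f\,uv$. This is exactly why Sawyer's argument, and the Li--Ombrosi--P\'erez extension used in the paper, do not stop at the $fv$-maximal cubes. One must stratify by the values of $v$ (the sets $E_k=\{a^k<v\le a^{k+1}\}$), perform a second Calder\'on--Zygmund decomposition of $v$ on certain cubes, and then build principal cubes selected by the growth of the averages of $u$, not of $fv$. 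The stopping time you describe (``modulated average of $fv$ doubles'') is the sparse-domination stopping time and does not by itself deliver the mixed weak-type bound.

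There is a second obstruction to running the argument globally. The $A_1^\rho$ inequality for $u$ and the $A_\infty^\rho$ inequality for $v$ each produce a factor $(1+r_Q/\rho(x_Q))^{\theta}$ at \emph{every} generation of the principal-cube tree, while the damping $(1+r_Q/\rho(x_Q))^{-\sigma'}$ from the operator is available only once, at the top cube; a chain of nested supercritical principal cubes therefore accumulates factors that no fixed $\sigma'$ can absorb. The paper avoids both problems by splitting $M^{\rho,\sigma}=M^\rho_{\textup{loc}}+M^{\rho,\sigma}_{\textup{glob}}$. The global piece is bounded by a direct computation using only $u\in A_1^\rho$ and Proposition~\ref{prop-cubrimientocritico}; this is where $\sigma$ is spent. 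The local piece is reduced, via Proposition~\ref{prop-cubrimientocritico} and Lemma~\ref{lem-Mloc a Mdiad}, to the estimate \eqref{eq: estimacion en un dilatado de un cubo critico} on a single cube $R$ that is a bounded dilate of a critical cube. On such an $R$ the factor $1+r/\rho$ is uniformly bounded, so $u$ and $v$ behave as \emph{classical} $A_1$ and $A_\infty$ weights, and the Li--Ombrosi--P\'erez machinery (Lemmas~\ref{lema: control exponencial de u de un subconjunto del cubo}--\ref{lema: sparsity de los cubos}, the families $\Gamma_{\ell,k}$, and the two principal-cube selections of Claims~\ref{afirmacion: h1 menor o igual que C u} and~\ref{afirmacion: h2 menor o igual que C u}) runs with no $\rho$-bookkeeping at all.
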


This result extends the weighted weak $(1,1)$ type for $M^{\rho,\theta}$ (see Proposition 4.2 in~\cite{BCH3}).

The extrapolation result given by Theorem~\ref{thm: extrapolacion} will allow us to get mixed estimates for Schrödinger Calderón-Zygmund operators (SCZO). 

Given $0<\delta\leq 1$, we say that a linear operator $T$ is a  \emph{Schrödinger-Calderón-Zygmund operator (SCZO) of $(\infty,\delta)$} type if
\begin{enumerate}[\rm(I)]
	\item $T$ is bounded from $L^1$ into $L^{1,\infty}$;
	\item $T$ has an associated kernel $K\colon\mathbb{R}^d\times\mathbb{R}^d\rightarrow\mathbb{R}$, in the sense that
	\begin{equation*}
	Tf(x)=\int_{\mathbb{R}^d} K(x,y)f(y)\,dy,\,\,\,\,
	f\in L_c^{\infty} \,\,\text{and a.e.}\,\,x\notin \text{supp}f;
	\end{equation*}
	\item
	for each $N>0$ there exists a constant $C_N$ such that
	\begin{equation}\label{TamPuntual}
	|K(x,y)| \leq
	\frac{C_N}{|x-y|^{d}} \left(1+ \frac{|x-y|}{\rho(x)}\right)^{-N},\,\,\, x\neq y, 
	\end{equation}		
	and there exists $C>0$ such that
	\begin{equation}\label{suav-puntual}
	|K(x,y)-K(x,y_0)|
	\leq C \frac{|y-y_0|^{\delta}}{|x-y|^{d+\delta}},\,\,\,\text{when}\,\,
	|x-y|>2|y-y_0|.
	\end{equation}
\end{enumerate}

Since these type of operators satisfy an estimate in the spirit of \eqref{eq: thm: extrapolacion - desigualdad de Coifman}, we can obtain mixed inequalities for them by combining Theorem~\ref{thm: extrapolacion} with Theorem~\ref{thm: mixta para M}.

\begin{teo}\label{thm: mixta para SCZO}
Let $0<\delta\leq 1$, $u\in A_1^\rho$ and $v\in A_\infty^\rho$. If $T$ is a SCZO of $(\infty,\delta)$ type, then there exists a positive constant $C$ for which the inequality
\begin{equation*}
	uv\left(\left\{x\in\mathbb{R}^d: \frac{|T(fv)(x)|}{v(x)}>t\right\}\right)\leq \frac{C}{t}\int_{\mathbb{R}^d} |f(x)|u(x)v(x)\,dx
	\end{equation*}
	holds for every positive $t$.
\end{teo}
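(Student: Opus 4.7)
The plan is to reduce the statement to the combination of Theorem~\ref{thm: extrapolacion} and Theorem~\ref{thm: mixta para M} flagged in the paragraph preceding the theorem. The missing piece is a Coifman--Fefferman type domination of $T$ by $M^{\rho,\sigma}$ in $L^{p_0}(w)$ for $w\in A_\infty^\rho$, which will serve as the input \eqref{eq: thm: extrapolacion - desigualdad de Coifman} for the extrapolation.

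Concretely, the first step is to establish that there exist $\sigma\geq 0$ and some $p_0>0$ such that
\[
\int_{\mathbb{R}^d}|Tf(x)|^{p_0}w(x)\,dx\leq C\int_{\mathbb{R}^d}\bigl(M^{\rho,\sigma}f(x)\bigr)^{p_0}w(x)\,dx
\]
holds for every $w\in A_\infty^\rho$ and every $f$ for which the left-hand side is finite, with $C$ depending only on $[w]_{A_\infty^\rho}$. I would follow the familiar sharp-maximal route adapted to the $\rho$-framework: the $L^1\to L^{1,\infty}$ boundedness of $T$ together with the pointwise kernel estimates \eqref{TamPuntual}--\eqref{suav-puntual} yield a pointwise bound $M^{\sharp,\rho}(Tf)(x)\lesssim M^{\rho,\sigma}f(x)$, where $M^{\sharp,\rho}$ denotes the $\rho$-adapted sharp maximal function of \cite{BHS-classesofweights}; the Fefferman--Stein inequality for $A_\infty^\rho$ weights established there then delivers the displayed Coifman bound.

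With this in hand, I apply Theorem~\ref{thm: extrapolacion} to the family
\[
\mathcal{F}=\bigl\{(|T(hv)|,\,M^{\rho,\sigma}(hv)):h\in L^\infty_c(\mathbb{R}^d)\bigr\}
\]
(choosing $\sigma$ compatibly with the value produced by Theorem~\ref{thm: mixta para M}, which is possible since $M^{\rho,\sigma}$ is non-increasing in $\sigma$ so that the weak-type estimate is inherited by all sufficiently large exponents while the Coifman bound is inherited by all sufficiently small ones) to obtain, for $u\in A_1^\rho$ and $v\in A_\infty^\rho$,
\[
\left\|\frac{T(hv)}{v}\right\|_{L^{1,\infty}(uv)}\leq C\,\left\|\frac{M^{\rho,\sigma}(hv)}{v}\right\|_{L^{1,\infty}(uv)}.
\]
Theorem~\ref{thm: mixta para M} bounds the right-hand side by $C\int_{\mathbb{R}^d}|h|\,uv$, yielding the desired weak-type estimate for $h\in L^\infty_c$. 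A monotone-convergence / density argument then extends it to arbitrary $f$ with $|f|uv\in L^1$.

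The main obstacle is the finiteness hypothesis that triggers Theorem~\ref{thm: extrapolacion}. One must know a priori that $\int|T(hv)|^{p_0}w<\infty$ for each $w\in A_\infty^\rho$, and since $A_\infty^\rho$ weights may have polynomial growth at infinity while $T(hv)$ only decays like $|x|^{-d}$, this is not automatic even for $h\in L^\infty_c$. I would bypass it via a truncation: either replace $T$ by a kernel-truncated $T_\varepsilon$ whose Coifman constants are uniform in $\varepsilon$, or work first with $\min(|Tf|,N)\chi_{B(0,R)}$ in place of $|Tf|$ and let $N,R\to\infty$ by Fatou. This bookkeeping is delicate but has become standard in this line of work.
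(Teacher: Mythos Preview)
Your approach matches the paper's: feed a Coifman-type domination of $T$ by $M^{\rho,\theta}$ into Theorem~\ref{thm: extrapolacion} and then apply Theorem~\ref{thm: mixta para M}. The paper does not prove the Coifman bound but simply quotes it from \cite{BCH13} as Theorem~\ref{teo: tipo Coifman para T (BCH)}, where it is established for \emph{every} $\theta>0$; this dissolves your compatibility concern outright (one just takes $\theta$ equal to the $\sigma$ produced by Theorem~\ref{thm: mixta para M}), and note that your monotonicity argument as written does not actually close that gap on its own, since it gives the Coifman bound only for $\sigma$ \emph{below} some threshold while the weak type needs $\sigma$ \emph{above} another. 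The paper does not discuss the finiteness issue you flag.
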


The theorem above is an extension of the mixed inequality for SCZO established in \cite{BPQ}, where $u$ and $v$ are related weights in the sense that the product $uv$ belongs to $A_\infty^{\rho}$. Furthermore, if we take $v=1$ we recover the endpoint weak type of the corresponding operators (see Theorem 1 in~\cite{BHQ1} and Theorem 3.6 in~\cite{BCH3})

\section{Preliminaries and definitions}\label{seccion: preliminares}

We shall be dealing with cubes with sides parallel to the coordinate axes. When necessary, by $Q(x_Q,r_Q)$ we shall denote the cube centered at $x_Q$ and radius $r_Q=\sqrt{d}\ell(Q)/2$, where $\ell(Q)$ stands for the  length of the sides of $Q$.

We shall very often refer to \emph{critical cubes}, meaning cubes of the type $Q(x_0,\rho(x_0))$, and we call \emph{subcritical cubes} to those $Q(x_0,r)$ with $r\leq \rho(x_0)$. The family of all subcritical cubes will be denoted by $\mathcal{Q}_\rho$.

Let us introduce the classes of weights involved in our estimates. These types of Muckenhoupt $A_p$ classes related to $\rho$ were first defined in~\cite{BHS-classesofweights}.

Let $1<p<\infty$ and $\theta\geq 0$. We say that $w\in A_p^{\rho,\theta}$ if there exists a positive constant $C$ such that the inequality
\begin{equation}\label{eq: clase Ap,rho,theta}
\left(\frac{1}{|Q|}\int_Q w\right)^{1/p}\left(\frac{1}{|Q|}\int_Q w^{1-p'}\right)^{1/p'}\leq C\left(1+\frac{r}{\rho(x)}\right)^{\theta}
\end{equation} 
holds for every cube $Q=Q(x,r)$.

Similarly, $w\in A_1^{\rho,\theta}$ if
\begin{equation}\label{eq: clase A1,rho,theta}
\frac{1}{|Q|}\int_Q w\leq C\left(1+\frac{r}{\rho(x)}\right)^{\theta}\inf_Q w,
\end{equation} 
for every cube $Q=Q(x,r)$.  

We say that $w\in A_\infty^{\rho,\theta}$ if there exists a positive constant $C$ for which the inequality
\begin{equation}\label{eq: clase Ainf,rho,theta}
\left(\frac{1}{|Q|}\int_Q w\right)\text{exp}\left(\frac{1}{|Q|}\int_Q \log w^{-1}\right)\leq C\left(1+\frac{r}{\rho(x)}\right)^{\theta}
\end{equation} 
holds for every cube $Q=Q(x,r)$.
The smallest constants for which the inequalities above hold will be denoted by $[w]_{A_p^{\rho,\theta}}$.

For $1\leq p\leq \infty$, the $A_p^\rho$ class is defined as the collection of all the $A_p^{\rho,\theta}$ classes for $\theta\geq 0$, that is
\[A_p^\rho=\bigcup_{\theta\geq 0} A_p^{\rho,\theta}.\]
It can also be proven that 
\[A_\infty^\rho =\bigcup_{p\geq 1} A_p^\rho.\]

  We shall be dealing with an equivalent characterization of the weights defined in \eqref{eq: clase Ainf,rho,theta}. It can be seen that $w\in A_\infty^{\rho,\theta}$ if there exist positive constants $C$ and $\varepsilon$ such that the inequality
\begin{equation}\label{eq: condicion Ainf,rho,theta con epsilon}
\frac{w(E)}{w(Q)}\leq C\left(1+\frac{r}{\rho(x)}\right)^\theta\left(\frac{|E|}{|Q|}\right)^\varepsilon
\end{equation}
holds for every cube $Q=Q(x,r)$ and every measurable subset $E$ of $Q$.

For $1\leq p\leq \infty$, we also say that $w\in A_p^{\rho,\text{loc}}$ if the corresponding inequality \eqref{eq: clase A1,rho,theta}, \eqref{eq: clase Ap,rho,theta} or \eqref{eq: clase Ainf,rho,theta} is satisfied by every cube $Q\in \mathcal{Q}_\rho$. It is immediate from this definition that $A_p^{\rho,\text{loc}}\subset A_p^{\rho}$.

Recall that every $A_p^\rho$ weight verifies a reverse Hölder type inequality. Concretely, given $\theta\geq 0$ and $1<s<\infty$, we say that a weight $w$ belongs to the \textit{reverse H\"{o}lder class} $\mathrm{RH}_s^{\rho,\theta}$ if there exists a positive constant $C$ such that for every cube $Q=Q(x,r)$  the inequality
\begin{equation}\label{eq: clase RHs,rho,theta}
\left(\frac{1}{|Q|}\int_Q w^s\right)^{1/s}\leq C\left(1+\frac{r}{\rho(x)}\right)^\theta\left(\frac{1}{|Q|}\int_Q w\right).
\end{equation}
holds. When $s=\infty$, we say that $w\in \mathrm{RH}_{\infty}^{\rho,\theta}$ if
\begin{equation}\label{eq: clase RHinf,rho,theta}
\sup_Q w\leq C\left(1+\frac{r}{\rho(x)}\right)^\theta\left(\frac{1}{|Q|}\int_Q w\right)
\end{equation}
holds for every cube $Q=Q(x,r)$ and $C$ independent of $Q$. The smallest constant $C$ for which these estimates hold will be denoted by $[w]_{\mathrm{RH}_s^{\rho,\theta}}$.

As in the case of $A_p^\rho$ classes, we define
\[\mathrm{RH}_s^\rho=\bigcup_{\theta\geq 0}\mathrm{RH}_s^{\rho,\theta}, \quad 1<s\leq\infty.\]
Muckenhoupt weights associated to a critical radius function are closely related with continuity properties of the Hardy-Littlewood maximal function in this general setting, which is defined by 
\begin{equation}\label{eq: operador maximal de H-L}
M^{\rho,\sigma}f(x)=\sup_{Q(x_0,r_0)\ni x} \left(1+\frac{r_0}{\rho(x_0)}\right)^{-\sigma}\left(\frac{1}{|Q|}\int_Q |f(y)|\,dy\right),
\end{equation}
for a locally integrable function $f$ and $\sigma\geq 0$. Related with this operator, for $q\geq 1$ we define 
\begin{equation}\label{eq: operador maximal de H-L q}
M_q^{\rho,\sigma}f(x)=\left(M^{\rho,\sigma}(|f|^q)(x)\right)^{1/q}.
\end{equation}

It will be especially interesting to consider a localized version of the operator $M^{\rho,\sigma}$. Given a cube $R$, the \emph{localized Hardy-Littlewood maximal operator} is given by
\begin{equation}\label{eq: maximal clasica localizada}
  M_Rf(x)=\sup_{Q\ni x, Q\subseteq R}\frac{1}{|Q|}\int_Q |f|,  
\end{equation}
for $f\in L^1_{\text{loc}}$.
We shall further consider a dyadic version of the operator above. By a \emph{dyadic grid} $\mathcal{D}$ we understand a collection of cubes in $\mathbb{R}^d$ with the following properties:
\begin{enumerate}
	\item every cube  $Q$ in $\mathcal{D}$ verifies $\ell(Q)=2^k$, for some $k\in\mathbb{Z}$;
	\item if $P$ and $Q$ are in $\mathcal{D}$ and $P\cap Q\neq\emptyset$, then either $P\subseteq Q$ or $Q\subseteq P$;
	\item $\mathcal{D}_k=\{Q\in \mathcal{D}: \ell(Q)=2^k\}$ is a partition of $\mathbb{R}^d$, for every $k\in \mathbb{Z}$.
\end{enumerate}

By $\mathscr{D}(R)$ we understand the collection of dyadic subcubes of $R$.  A dyadic version of \eqref{eq: maximal clasica localizada} is given by
\begin{equation}\label{eq: maximal diadica localizada}
M_{R}^{\mathscr{D}}f(x)=\sup_{Q\ni x, Q\in \mathscr{D}(R)} \frac{1}{|Q|}\int_Q |f(y)|\,dy.
\end{equation}

\section{Proof of Theorem~\ref{thm: mixta para M}}

In order to give our mixed estimate for $M^{\rho,\sigma}$ we shall require the following result that can be found as Lemma 2.3 in~\cite{DZ-99}.

\begin{propo}\label{prop-cubrimientocritico}
	There exists a sequence of points $\{x_j\}_{j\in\mathbb{N}}$ such that the family of critical cubes given by $Q_j=~Q(x_j,\rho(x_j))$  satisfies
	\begin{enumerate}[\rm(a)]
		\item \label{item: prop-cubrimientocritico - item a}$\displaystyle \bigcup_{j\in\mathbb{N}} Q_j= \mathbb{R}^d$.
		\item \label{item: prop-cubrimientocritico - item b}There exist positive constants $C$ and $N_1$ such that for any $\sigma\geq1$,
		$\displaystyle \sum_{j\in\mathbb{N}}\mathcal{X}_{\sigma Q_j}\leq C \sigma^{N_1}$.
	\end{enumerate}
\end{propo}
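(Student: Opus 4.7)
I would establish this covering lemma by a standard Vitali-type selection argument, with the slow-variation property \eqref{eq: constantesRho} playing a decisive role both in the covering step and in the overlap estimate. The plan has three steps: choose a maximal disjoint subfamily of shrunk critical cubes, upgrade maximality to a covering of $\mathbb{R}^d$ by the full critical cubes $Q_j$, and finally bound the overlap of dilates by a volume count.

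For the construction I would fix a small shrink constant $\kappa\in(0,1)$ (to be calibrated so the two geometric estimates below close) and consider the family $\mathcal{F}=\{Q(x,\kappa\rho(x)):x\in\mathbb{R}^d\}$. A standard Zorn/transfinite argument (or, more concretely, an exhaustion of $\mathbb{R}^d$ by dyadic annuli together with a finite selection on each) produces a countable sequence $\{x_j\}_{j\in\mathbb{N}}$ such that $\{Q(x_j,\kappa\rho(x_j))\}$ is pairwise disjoint and maximal with respect to that property. Countability is automatic because $\rho$ is locally bounded below by \eqref{eq: constantesRho}, so any bounded region of $\mathbb{R}^d$ admits only finitely many disjoint cubes of uniformly positive size.

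To verify (a) I would invoke maximality: given $y\in\mathbb{R}^d$, the cube $Q(y,\kappa\rho(y))$ must meet some $Q(x_j,\kappa\rho(x_j))$, for otherwise $y$ could be adjoined to the sequence. This forces $|y-x_j|\lesssim\kappa(\rho(y)+\rho(x_j))$. Applying the right half of \eqref{eq: constantesRho} to compare $\rho(y)$ with $\rho(x_j)$ gives $\rho(y)\leq C_0\rho(x_j)(1+|y-x_j|/\rho(x_j))^{N_0/(N_0+1)}$, and combining the two inequalities shows that, for $\kappa$ small enough depending on $C_0$ and $N_0$, one has $|y-x_j|\leq\rho(x_j)/\sqrt d$, i.e.\ $y\in Q_j$. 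For (b), fix $x\in\mathbb{R}^d$ and $\sigma\geq 1$ and set $J(x)=\{j:x\in\sigma Q_j\}$. If $j\in J(x)$ then $|x-x_j|\lesssim\sigma\rho(x_j)$, and \eqref{eq: constantesRho} yields both $\rho(x_j)\lesssim\sigma^{N_0/(N_0+1)}\rho(x)$ and $\rho(x_j)\gtrsim\sigma^{-N_0}\rho(x)$. Hence all such $x_j$ lie in a cube centered at $x$ of side $\lesssim\sigma^{1+N_0/(N_0+1)}\rho(x)$, while the pairwise disjoint cubes $Q(x_j,\kappa\rho(x_j))$ each have volume $\gtrsim(\sigma^{-N_0}\rho(x))^d$. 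A volume count then gives $\#J(x)\leq C\sigma^{N_1}$ with $N_1=d(1+N_0/(N_0+1)+N_0)$, which is exactly the pointwise bound asserted.

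The main obstacle will be the simultaneous calibration of $\kappa$: it must be chosen small enough for the maximality argument in (a) to close in spite of the possibility that $\rho(y)$ greatly exceeds $\rho(x_j)$, while the same $\kappa$ then governs the volume lower bound in (b) and thus the exponent $N_1$. A secondary nuisance is keeping track of the paper's convention $r_Q=\sqrt d\,\ell(Q)/2$ when passing between Euclidean distances and cube containment while invoking \eqref{eq: constantesRho}; this only affects the precise value of $N_1$, not the structure of the argument.
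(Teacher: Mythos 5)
The paper does not actually prove Proposition~\ref{prop-cubrimientocritico}; it imports it as a black box from Lemma~2.3 of Dziuba\'nski--Zienkiewicz~\cite{DZ-99}, so there is no in-paper argument to compare against. Your Vitali-type maximal-disjoint-subfamily argument is correct and is the standard proof of this kind of covering lemma for critical radius functions (and, modulo their use of balls rather than cubes, is essentially the argument in \cite{DZ-99}). Two minor remarks. When you bound $\rho(x_j)$ in terms of $\rho(x)$ for $x\in\sigma Q_j$, applying \eqref{eq: constantesRho} with $x_j$ in the first slot --- the only direction in which the ratio $|x-x_j|/\rho(x_j)\lesssim\sigma$ controls the argument of the parenthesis --- actually yields $\rho(x_j)\lesssim\sigma^{N_0}\rho(x)$ from the left half and $\rho(x_j)\gtrsim\sigma^{-N_0/(N_0+1)}\rho(x)$ from the right half, i.e.\ with the two exponents swapped relative to what you wrote; since $N_1$ depends only on the sum $N_0+N_0/(N_0+1)$, your final exponent $N_1=d\bigl(1+N_0+N_0/(N_0+1)\bigr)$ is nevertheless the one the argument produces. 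Also, the ``simultaneous calibration of $\kappa$'' you flag as the main obstacle is not really a constraint: $\kappa$ need only be small enough for the maximality step in (a) to close (a convexity or iteration argument in $t=|y-x_j|/\rho(x_j)$, exploiting that the exponent $N_0/(N_0+1)<1$, shows the fixed-point bound tends to $0$ with $\kappa$), while in (b) the value of $\kappa$ enters only the multiplicative constant $C$ and not the exponent $N_1$, so any $\kappa$ that works for (a) automatically works for (b).
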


The following lemma establishes a relation between the localized Hardy-Littlewood maximal operator and its dyadic version.  

\begin{lema}[\cite{BPQ}, Lemma 10]\label{lem-Mloc a Mdiad}
	There exists dyadic grids $\mathcal{D}^{(i)}$, $1\leq i\leq 3^d$ with the following property:
	For every cube $Q$ in $\mathbb{R}^d$ there exists $3^d$ dyadic cubes $Q_i\in \mathcal{D}^{(i)}$, $1\leq i\leq 3^d$, such that
	\[M_Q f(x)\leq 3^d \sum_{i=1}^{3^d}M_{Q_i}^{\mathscr{D}}(f\mathcal{X}_{Q})(x),\]
	for every $x\in Q$. Furthermore, each $Q_i$ verifies $Q\subseteq Q_i\subseteq \lambda Q$, where $\lambda$ depends only on $d$.
\end{lema}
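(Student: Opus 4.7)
The plan is to prove the lemma via the classical \emph{one-third trick} (Okikiolu/Hyt\"onen $3^d$-lattice theorem), adapted so as to dominate the localized maximal operator $M_Q$ by dyadic maximal operators supported on cubes comparable to $Q$.

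First I would take as $\mathcal{D}^{(i)}$, $1\le i\le 3^d$, the $3^d$ standard shifted dyadic grids indexed by $\alpha\in\{0,1/3,2/3\}^d$. These enjoy the well-known one-third property: for every cube $P\subset\mathbb{R}^d$ there is at least one index $i(P)$ and a cube $\widehat{P}\in\mathcal{D}^{(i(P))}$ with $P\subseteq\widehat{P}$ and $\ell(\widehat{P})\le 3\ell(P)$. Then, given $Q$, for each $i$ I would select $Q_i\in\mathcal{D}^{(i)}$ containing $Q$ with $\ell(Q_i)\le \lambda\ell(Q)$ for some $\lambda=\lambda(d)>3$, by picking $Q_i$ at a predetermined dyadic scale $2^{k_0}$ with $2^{k_0}$ slightly larger than $3\ell(Q)$ and using the geometry of the $3^d$ shifts to guarantee that in each grid at least one cube at that scale actually encloses $Q$ inside $\lambda Q$.

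With the $Q_i$'s chosen, I would argue pointwise. Fix $x\in Q$ and an arbitrary subcube $P\ni x$ with $P\subseteq Q$. By the one-third trick there exist $i=i(P)$ and $\widehat{P}\in\mathcal{D}^{(i)}$ with $P\subseteq\widehat{P}$ and $\ell(\widehat{P})\le 3\ell(P)\le 3\ell(Q)<\ell(Q_i)$. Since $\widehat{P},Q_i\in\mathcal{D}^{(i)}$ and $\widehat{P}\cap Q_i\supseteq P$ is nonempty, the dyadic nesting property together with the size comparison force $\widehat{P}\subsetneq Q_i$, hence $\widehat{P}\in\mathscr{D}(Q_i)$. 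Therefore
\[
\frac{1}{|P|}\int_{P}|f(y)|\,dy\le \frac{|\widehat{P}|}{|P|}\cdot\frac{1}{|\widehat{P}|}\int_{\widehat{P}}|f(y)|\mathcal{X}_{Q}(y)\,dy\le 3^{d}\,M_{Q_{i}}^{\mathscr{D}}(f\mathcal{X}_{Q})(x).
\]
Taking the supremum over all admissible $P$ and upgrading the maximum over $i\in\{1,\dots,3^d\}$ to a sum yields the stated bound $M_Q f(x)\le 3^d\sum_{i=1}^{3^d}M_{Q_i}^{\mathscr{D}}(f\mathcal{X}_Q)(x)$.

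The main obstacle is the geometric content of the second step: guaranteeing that in \emph{every} grid $\mathcal{D}^{(i)}$ a dyadic enclosing cube $Q_i$ with $Q\subseteq Q_i\subseteq\lambda Q$ can be chosen, not merely in the single ``lucky'' grid furnished by the bare one-third trick (a naive choice of the smallest cube of $\mathcal{D}^{(i)}$ containing $Q$ can fail when $Q$ straddles a high-level boundary of that grid). This is handled by fixing a dyadic scale in advance in terms of $\ell(Q)$ and exploiting the explicit structure of the $3^d$ shifts to ensure, in each grid, the existence of at least one cube of that scale lying inside $\lambda Q$; this is where the exponent $3^d$ in the number of grids is essentially used.
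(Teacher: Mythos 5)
Your overall strategy---the $3^d$ shifted dyadic grids, the one-third trick applied to each subcube $P$, and dyadic nesting to place the enclosing cube $\widehat P$ inside a preselected $Q_i$---is the right tool, and the pointwise domination you derive is fine \emph{once the cubes $Q_i$ are available}. But the step you yourself flag as ``the main obstacle'' and then dismiss with a hand-wave is a genuine gap, and the proposed repair does not work.

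The one-third trick gives: for each cube $P$ there is \emph{some} index $i(P)$ and $\widehat P\in\mathcal{D}^{(i(P))}$ with $P\subseteq\widehat P$ and $\ell(\widehat P)\le 3\ell(P)$. It does not---and for \emph{any} finite family of dyadic grids cannot---guarantee that in \emph{every} grid there is a cube $Q_i$ with $Q\subseteq Q_i\subseteq\lambda Q$. With the usual shifts $\alpha\in\{0,1/3,2/3\}^d$, the unshifted grid $\mathcal{D}^{(0)}$ never has a cube whose interior contains the origin, so if $Q$ straddles a coordinate hyperplane through $0$ there is \emph{no} cube of $\mathcal{D}^{(0)}$ containing $Q$ at all, let alone one of size $\le\lambda\ell(Q)$. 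More generally, for any single dyadic grid $\mathcal{D}$ and any prescribed scale one can choose a small cube $Q$ straddling a dyadic face that persists to that scale, so the smallest $\mathcal{D}$-cube containing $Q$ is as large relative to $\ell(Q)$ as one likes. The exponent $3^d$ in the number of grids is there so that each $P$ finds its lucky grid; it has nothing to do with rescuing the claim that all grids enclose $Q$ tightly.

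The correct repair is to let the enclosing dyadic cube depend on the point. Fix $k_0$ with $2^{k_0-1}<3\ell(Q)\le 2^{k_0}$, and for $x\in Q$ let $Q_i^x$ be the unique cube of $\mathcal{D}^{(i)}$ of sidelength $2^{k_0}$ containing $x$. If $P\ni x$, $P\subseteq Q$, and $\widehat P\in\mathcal{D}^{(i(P))}$ is the one-third-trick cube for $P$, then $x\in\widehat P$, $\ell(\widehat P)\le 3\ell(Q)\le 2^{k_0}$, and nesting gives $\widehat P\subseteq Q_{i(P)}^{x}$; moreover $Q_i^x\subseteq\lambda Q$ since $Q_i^x$ meets $Q$ and has sidelength $<6\ell(Q)$. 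As $x$ ranges over $Q$ the cube $Q_i^x$ takes at most $2^d$ values per grid (since $Q$ meets at most $2^d$ cubes of $\mathcal{D}^{(i)}$ at scale $2^{k_0}$); enumerating these yields at most $6^d$ cubes $Q_{i,j}\subseteq\lambda Q$ with $Q\subseteq\bigcup_j Q_{i,j}$ for each $i$, and $M_Qf(x)\le 3^d\sum_{i,j}M^{\mathscr{D}}_{Q_{i,j}}(f\mathcal{X}_Q)(x)$. This is what the application in the proof of Theorem~\ref{thm: mixta para M} actually uses: only $R_{i,j}\subseteq\lambda P_j$, the covering $P_j\subseteq\bigcup R_{i,j}$, and bounded overlap of the $\lambda P_j$ are needed---the inclusion $P_j\subseteq R_{i,j}$ for each \emph{individual} $i$ plays no role once $f$ is cut off by $\mathcal{X}_{P_j}$. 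In short: you cannot ``fix a dyadic scale in advance'' and get one enclosing cube per grid; you must either let the $Q_i$ depend on $x$ (or, equivalently, take up to $2^d$ of them per grid) and drop the lower inclusion $Q\subseteq Q_i$ from the conclusion.
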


The next lemma establishes a Calderón-Zygmund decomposition when we are restricted to a fixed cube. We give a proof for the sake of completeness.

\begin{lema}[Calderón-Zygmund decomposition on a cube]\label{lema: descomposicion de CZ en un cubo fijo}
Let $\lambda>0$, $f$ be a bounded function with compact support and $R$ be a cube in $\mathbb{R}^d$ such that $\displaystyle \frac{1}{|R|}\int_R |f|\leq \lambda$. Then there exists a family of maximal cubes $\Lambda=\{Q_j\}_j\subseteq \mathscr{D}(R)$ that verifies
\[\left\{x\in R: M_R^{\mathscr{D}}f(x)>\lambda\right\}=\bigcup_j Q_j\]
and $\displaystyle \lambda<\frac{1}{|Q_j|}\int_{Q_j}|f|\leq 2^d\lambda$, for every $j$.
\end{lema}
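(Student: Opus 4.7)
The plan is to mimic the classical Calderón--Zygmund stopping-time argument, restricted to the dyadic lattice $\mathscr{D}(R)$ and exploiting the fact that $R$ itself serves as a ceiling for the stopping.

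\textbf{Step 1 (selection of the maximal cubes).} Fix any $x\in R$ with $M_R^{\mathscr{D}}f(x)>\lambda$. By definition of $M_R^{\mathscr{D}}$ there exists some $Q\in\mathscr{D}(R)$ containing $x$ with $|Q|^{-1}\int_Q |f|>\lambda$. The ancestors of $Q$ inside $\mathscr{D}(R)$ form a finite totally ordered chain ending at $R$, and the hypothesis $|R|^{-1}\int_R|f|\leq \lambda$ rules out $R$ as a candidate; hence there is a (unique) largest dyadic cube in $\mathscr{D}(R)$ containing $x$ whose average of $|f|$ strictly exceeds $\lambda$. Let $\Lambda=\{Q_j\}_j$ be the collection obtained by running this selection over all such $x$.

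\textbf{Step 2 (disjointness and characterization of the level set).} Distinct elements of $\Lambda$ are disjoint: if $Q_j\cap Q_k\neq\emptyset$ then one dyadic cube contains the other, contradicting the maximality built into Step~1 for the smaller one. The inclusion $\bigcup_j Q_j \subseteq \{x\in R:M_R^{\mathscr{D}}f(x)>\lambda\}$ is immediate, since each $Q_j$ itself witnesses the defining inequality for every one of its points. The opposite inclusion is exactly what Step~1 provides.

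\textbf{Step 3 (upper bound on the averages).} The lower bound $\lambda<|Q_j|^{-1}\int_{Q_j}|f|$ is built into the construction. For the upper bound, note that $Q_j\neq R$ (again because $R$'s average is at most $\lambda$), so $Q_j$ has a dyadic parent $\widetilde{Q}_j\in\mathscr{D}(R)$ with $|\widetilde{Q}_j|=2^d|Q_j|$. By maximality of $Q_j$, the parent satisfies $|\widetilde{Q}_j|^{-1}\int_{\widetilde{Q}_j}|f|\leq \lambda$, and since $Q_j\subseteq \widetilde{Q}_j$,
\[
\frac{1}{|Q_j|}\int_{Q_j}|f| \;\leq\; \frac{2^d}{|\widetilde{Q}_j|}\int_{\widetilde{Q}_j}|f| \;\leq\; 2^d\lambda.
\]

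There is essentially no obstacle here: this is a verbatim transcription of the classical dyadic Calderón--Zygmund decomposition, with the only mild point being that the hypothesis $|R|^{-1}\int_R|f|\leq \lambda$ plays the usual role of ``$f$ has mean less than $\lambda$ at infinity'', guaranteeing that the stopping procedure terminates strictly below $R$ and that every selected cube has a parent inside $\mathscr{D}(R)$ on which maximality can be invoked.
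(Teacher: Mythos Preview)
Your proof is correct and takes essentially the same approach as the paper: both carry out the classical dyadic Calder\'on--Zygmund stopping-time argument on $\mathscr{D}(R)$, using the hypothesis $|R|^{-1}\int_R|f|\le\lambda$ to guarantee each selected cube has a parent in $\mathscr{D}(R)$ on which maximality can be invoked. The only stylistic difference is that the paper runs the selection top-down (subdivide and retain) while you phrase it bottom-up (take maximal cubes through each point); your version is in fact slightly more explicit in verifying the level-set equality.
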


\begin{proof}
For $k\in \mathbb{N}_0$, let $\mathcal{D}_k=\{Q\in \mathscr{D}(R): \ell(Q)=2^{-k}\ell(R)\}$. Since
\[\frac{1}{|R|}\int_R |f|\leq\lambda,\]
we divide the cube $R$ by bisecting each side of it, obtaining $2^d$ subcubes in $\mathcal{D}_1(R)$. We pick those cubes $Q$ in $\mathcal{D}_1(R)$ such that 
\[\frac{1}{|Q|}\int_Q|f|>\lambda\]
and we bisect the others. We can repeat this process indefinitely to obtain that
\[\left\{x\in R: M_R^{\mathscr{D}}f(x)>\lambda\right\}=\bigcup_j Q_j.\]
In order to see that the cubes of the decomposition are maximal, if a cube $Q\in \mathcal{D}_k(R)$ was chosen, then its ancestor $Q'\in \mathcal{D}_{k-1}(R)$ was not, so
\[\lambda<\frac{1}{|Q|}\int_Q |f|\leq \frac{|Q'|}{|Q|}\frac{1}{|Q'|}\int_{Q'} |f|\leq 2^d\lambda.\qedhere\]
\end{proof}

\bigskip

We shall introduce some useful notation required for the proof of Theorem~\ref{thm: mixta para M}. As we shall see, it will be enough to show that
\begin{equation}\label{eq: estimacion en un dilatado de un cubo critico}
uv\left(\left\{x\in R:\frac{M^{\mathscr{D}}_R(fv)(x)}{v(x)}>t\right\}\right)\leq \frac{C}{t}\int_{R} |f|uv,
\end{equation}
where $R$ is a fixed cube obtained from dilating a critical cube. In order to simplify we shall assume  
that $t=1$ and $g=|f|v$ is a bounded function with compact support. Fixed a number $a>2^n$, we proceed as follows
\begin{align*}
  uv\left(\left\{x\in R:\frac{M^{\mathscr{D}}_R g(x)}{v(x)}>1\right\}\right)&=\sum_{k\in \mathbb{Z}} uv\left(\left\{x\in R:\frac{M^{\mathscr{D}}_R g(x)}{v(x)}>1, a^k<v\leq a^{k+1}\right\}\right)\\
  &=\sum_{k\in\mathbb{Z}}uv(E_k).
\end{align*}
Let $\Omega_k=\left\{x\in R: M_R^{\mathscr{D}}g(x)>a^k\right\}$. There exists a unique $k_0\in\mathbb{Z}$ such that
\[a^{k_0-1}< \frac{1}{|R|}\int_R g\leq a^{k_0}.\]
Thus we obtain that $\Omega_k=R$ for every $k\leq k_0-1$. Since $R$ is a fixed dilation of a critical cube, this allows us to write
\begin{align*}
    \sum_{k\leq k_0-1}uv(E_k)&\leq \sum_{k\leq k_0-1}a^{k+1-k_0}u(R)a^{k_0}\\
    &\leq \sum_{k\leq k_0-1} a^{k-k_0}u(R)\frac{a^2}{|R|}\int_R g\\
    &\leq C_{\theta} a^2[u]_{A_1^{\rho,\theta}}\left(\int_R|f|uv\right)\sum_{k\leq k_0-1}a^{k-k_0}\\
    &=C_{\theta}a^2[u]_{A_1^{\rho,\theta}}\left(\int_R|f|uv\right)\sum_{j=1}^\infty a^{-j}\\
    &=C_{\theta}\frac{a^2}{a-1}[u]_{A_1^{\rho,\theta}}\int_R|f|uv.
\end{align*}
Therefore, it will be enough to estimate the sum for $k\geq k_0$. Recall that for these index we have $\displaystyle \frac{1}{|R|}\int_R g\leq a^{k}$.

By Lemma~\ref{lema: descomposicion de CZ en un cubo fijo}, for every $k\geq k_0$ there exists a family of dyadic subcubes $\{Q_j^k\}_j\subseteq \mathscr{D}(R)$ such that $\Omega_k=\bigcup_j Q_j^k$ and
\begin{equation}\label{eq: promedio de g como a^k}
  a^k<\frac{1}{|Q_j^k|}\int_{Q_j^k}g\leq 2^da^k,\quad\text{ for every }j.  
\end{equation}
We proceed to classify these cubes in the following way. For $\ell\in \mathbb{N}_0$, we define
\[\Lambda_{\ell,k}=\left\{Q_j^k: a^{k+\ell}\leq\frac{1}{|Q_j^k|}\int_{Q_j^k}v< a^{k+\ell+1}\right\}\]
and
\[\Lambda_{-1,k}=\left\{Q_j^k: \frac{1}{|Q_j^k|}\int_{Q_j^k}v<a^{k}\right\}.\]
We further split the cubes on this last family. For each $Q_j^k\in \Lambda_{-1,k}$, we perform the Calderón-Zygmund decomposition of $v\mathcal{X}_{Q_j^k}$ at level $a^k$ in order to obtain a collection of subcubes $\{Q_{j,i}^k\}_i\subseteq \mathscr{D}(Q_j^k)$ such that
\begin{equation}\label{eq: promedio de v en cubos -1 como a^k}
    a^k<\frac{1}{|Q_{j,i}^k|}\int_{Q_{j,i}^k}v\leq 2^da^k,\quad\text{ for every }i.
\end{equation}
We also define, for $\ell\in \mathbb{N}_0$, the sets
\[\Gamma_{\ell,k}=\left\{Q_j^k\in \Lambda_{\ell,k}: \left|Q_j^k\cap \{x: a^k<v(x)\leq a^{k+1}\}\right|>0\right\}\]
and 
\[\Gamma_{-1,k}=\left\{Q_{j,i}^k: Q_j^k\in \Lambda_{-1,k} \text{ and } \left|Q_{j,i}^k\cap \{x: a^k<v(x)\leq a^{k+1}\}\right|>0\right\}.\]

The following two lemmas were established in \cite{L-O-P}. 

\begin{lema}\label{lema: control exponencial de u de un subconjunto del cubo}
If $\ell\geq 0$ and $Q_j^k\in \Gamma_{\ell,k}$, there exist two positive constants $c_1$ and $c_2$, depending only on $u$ and $v$, such that
\[u(E_k\cap Q_j^k)\leq c_1e^{-c_2\ell}u(Q_j^k)\]
\end{lema}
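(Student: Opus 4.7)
The plan is to exploit that, on $E_k$, the weight $v$ is bounded above by $a^{k+1}$, while on $Q_j^k \in \Gamma_{\ell,k} \subseteq \Lambda_{\ell,k}$ its average satisfies $v_{Q_j^k} := |Q_j^k|^{-1}\int_{Q_j^k} v \geq a^{k+\ell}$. The set $F := E_k \cap Q_j^k$ is therefore contained in $\{x \in Q_j^k : v(x) \leq a^{1-\ell}\, v_{Q_j^k}\}$. I would show that $|F|/|Q_j^k|$ decays exponentially in $\ell$ using the $A_\infty^\rho$ property of $v$, and then transfer this bound to $u(F)/u(Q_j^k)$ using the $A_\infty^\rho$ property of $u$.

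For the first step, since $v \in A_\infty^\rho$ there exist $p \in (1,\infty)$ and $\theta_1 \geq 0$ with $v \in A_p^{\rho,\theta_1}$. Applying Chebyshev's inequality to $v^{-1/(p-1)}$ on $Q_j^k$, and invoking \eqref{eq: clase Ap,rho,theta}, gives
\[\frac{|F|}{|Q_j^k|} \leq \bigl(a^{1-\ell} v_{Q_j^k}\bigr)^{1/(p-1)}\, \frac{1}{|Q_j^k|}\int_{Q_j^k} v^{1-p'} \leq C\left(1+\frac{r_{Q_j^k}}{\rho(x_{Q_j^k})}\right)^{p'\theta_1} a^{(1-\ell)/(p-1)}.\]
Next, since $u \in A_1^\rho \subseteq A_\infty^\rho$, the characterization \eqref{eq: condicion Ainf,rho,theta con epsilon} applied to $F \subseteq Q_j^k$ yields constants $C, \varepsilon, \theta_2 > 0$ for which
\[\frac{u(F)}{u(Q_j^k)} \leq C\left(1+\frac{r_{Q_j^k}}{\rho(x_{Q_j^k})}\right)^{\theta_2}\left(\frac{|F|}{|Q_j^k|}\right)^\varepsilon.\]

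The observation that ties these two estimates together is that $R$ is a fixed dilation of a critical cube, so for every dyadic subcube $Q_j^k \subseteq R$ the quotient $r_{Q_j^k}/\rho(x_{Q_j^k})$ is bounded above by a constant depending only on $\rho$, via \eqref{eq: constantesRho}. Every factor of the form $(1+r_{Q_j^k}/\rho)^\theta$ is therefore absorbed into absolute constants, and combining the two displays produces $u(F) \leq c_1 e^{-c_2 \ell} u(Q_j^k)$ with $c_2 = \varepsilon (\log a)/(p-1) > 0$ and $c_1$ depending only on $u$ and $v$. The only mildly delicate step is the lower-tail distributional bound in the first display, which requires extracting an $A_p^{\rho,\theta_1}$ hypothesis from $A_\infty^\rho$ and invoking duality through $v^{1-p'}$; everything else is routine bookkeeping.
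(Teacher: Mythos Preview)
Your argument is correct. The paper does not supply its own proof of this lemma but simply cites \cite{L-O-P}; your proposal is precisely the standard \cite{L-O-P} argument, adapted to the $A_p^\rho$ setting via the observation that every $Q_j^k$ is a subcube of the fixed dilate $R$ of a critical cube, so the factors $(1+r_{Q_j^k}/\rho(x_{Q_j^k}))^\theta$ are uniformly bounded---the same reduction the paper itself invokes a few lines later when it writes ``$v\in A_\infty^\rho$ \dots implies that $v\in A_\infty(R)$''.
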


\begin{lema}\label{lema: sparsity de los cubos}
There exists a positive constant $C$, depending only on $a$ and $d$, such that
\[\left|\bigcup_{Q'\in \Gamma, Q'\subseteq Q} Q'\right|\leq C|Q|,\]
for every cube $Q\in \Gamma=\bigcup_{\ell\geq -1}\bigcup_{k\geq k_0} \Gamma_{\ell,k}$.
\end{lema}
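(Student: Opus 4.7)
The plan is to interpret the displayed inequality as the Carleson/sparsity condition $\sum_{Q'\in\Gamma,\,Q'\subseteq Q}|Q'|\leq C|Q|$, since taken literally the left-hand side is at most $|Q|$ for free. I fix $Q\in\Gamma_{\ell_0,k_0}$ and proceed in three steps. First, every proper $\Gamma$-subcube of $Q$ must live at a level $k\geq k_0+1$: the cubes in $\Gamma_{\ell,k_0}$ with $\ell\geq 0$ are disjoint $k_0$-CZ-$g$ cubes, and those in $\Gamma_{-1,k_0}$ are sub-CZ-of-$v$ cubes inside $\Lambda_{-1,k_0}$-parents, so each is either equal to $Q$ or disjoint from $Q$. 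It then remains to bound the contributions from $k'\geq k_0+1$, which I split according to whether $\ell'\geq 0$ or $\ell'=-1$.

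For the $\ell'\geq 0$ part I would use the stopping rule $|Q'|\leq a^{-k'}\int_{Q'}g$ together with the disjointness of all $k'$-CZ-$g$ cubes to obtain
\[
\sum_{\ell'\geq 0}\sum_{\substack{Q'\in\Gamma_{\ell',k'}\\Q'\subseteq Q}}|Q'|\;\leq\; a^{-k'}\int_Q g.
\]
When $\ell_0\geq 0$, $Q$ is itself a $k_0$-CZ-$g$ cube and so $\int_Q g\leq 2^d a^{k_0}|Q|$, giving a geometric series over $k'\geq k_0+1$ bounded by $2^d|Q|/(a-1)$. When $\ell_0=-1$ the integral $\int_Q g$ is not directly controlled; I would switch to the $v$-based estimate, using that each $Q'\in\Lambda_{\ell',k'}$ has $\frac{1}{|Q'|}\int_{Q'}v\geq a^{k'+\ell'}$ to get $\sum_{\ell'\geq 0}a^{\ell'}N_{\ell'}\leq a^{-k'}\int_Q v$, where $N_{\ell'}=\sum_{Q'\in\Gamma_{\ell',k'},\,Q'\subseteq Q}|Q'|$; since $a^{\ell'}\geq 1$ this dominates $\sum N_{\ell'}$, and the $v$-CZ stopping that produced $Q$ forces $\int_Q v\leq 2^d a^{k_0}|Q|$, again yielding the same order.

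For the $\ell'=-1$ part, the subcubes are disjoint and satisfy $|Q'|<a^{-k'}\int_{Q'}v$. If $\ell_0=-1$ I bound $\sum|Q'|\leq a^{-k'}\int_Q v\leq 2^d a^{k_0-k'}|Q|$ directly. If $\ell_0\geq 0$ I route through the parents: each sub-CZ-of-$v$ cube lies inside some $\Lambda_{-1,k'}$-parent $Q_j^{k'}$ with $\sum_i|Q_{j,i}^{k'}|\leq|Q_j^{k'}|$ (from $v$-CZ stopping on $Q_j^{k'}$), so the total is $\leq\sum_{Q_j^{k'}\subseteq Q}|Q_j^{k'}|\leq a^{-k'}\int_Q g\leq 2^d a^{k_0-k'}|Q|$. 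In both situations the geometric sum over $k'\geq k_0+1$ contributes at most $2^d|Q|/(a-1)$, and combining the two parts yields $C=1+2^{d+1}/(a-1)$, which depends only on $a$ and $d$.

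The hard part will be the cross-handling of the four sub-cases generated by the type of $Q$ ($\ell_0\geq 0$ versus $\ell_0=-1$) and the type of subcube ($\ell'\geq 0$ versus $\ell'=-1$): the natural $g$-bound fails exactly where the $v$-bound succeeds, so the argument has to interleave both estimates. The structural reason the dovetailing works is that the $v$-CZ stopping rule defining the $\Gamma_{-1}$-cubes automatically supplies the fallback $\int_Q v\leq 2^d a^{k_0}|Q|$ precisely in the regime where the $g$-bound $\int_Q g\leq 2^d a^{k_0}|Q|$ is unavailable, and the elementary inequality $\sum N_{\ell'}\leq\sum a^{\ell'}N_{\ell'}$ (valid for $a\geq 1$) lets us pass from a weighted count to an unweighted one at no cost.
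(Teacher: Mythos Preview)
The paper does not prove this lemma; it simply cites \cite{L-O-P}. Your observation that the displayed inequality is trivial as written (a union of subsets of $Q$ has measure at most $|Q|$) is correct, and your reinterpretation as the Carleson packing condition $\sum_{Q'\in\Gamma,\,Q'\subseteq Q}|Q'|\leq C|Q|$ is precisely the form stated and proved in \cite{L-O-P}; the paper's formulation with a union is a misprint. So there is no in-paper argument to compare yours to.

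Your four-case dovetailing of the $g$- and $v$-stopping estimates is correct and gives $C=1+2^{d+1}/(a-1)$. There is, however, one omission in the first step. You assert that every proper $\Gamma$-subcube of $Q\in\Gamma_{\ell_0,k_0}$ lives at a level $k'\geq k_0+1$, but you only discuss $k'=k_0$; levels $k'<k_0$ must also be ruled out. For $\ell'\geq 0$ this is immediate from $\Omega_{k_0}\subseteq\Omega_{k'}$: the unique $Q_m^{k'}$ meeting $Q$ contains it. For $\ell'=-1$, suppose $Q'=Q_{j',i'}^{k'}\subsetneq Q$. Since $Q\subseteq\Omega_{k_0}\subseteq\Omega_{k'}$, one has $Q'\subsetneq Q\subseteq Q_{j'}^{k'}$, and the maximality of $Q'$ in the $v$-CZ decomposition of $Q_{j'}^{k'}$ then forces $\frac{1}{|Q|}\int_Q v\leq a^{k'}$. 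But membership of $Q$ in $\Gamma$ gives $\frac{1}{|Q|}\int_Q v\geq a^{k_0}$ in either case (from the lower bound in $\Lambda_{\ell_0,k_0}$ when $\ell_0\geq 0$, and from the $v$-CZ stopping when $\ell_0=-1$), contradicting $k'<k_0$. With this line added, your argument is complete.
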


We now proceed with the proof of the mixed estimates for $M^{\rho,\sigma}$.

\begin{proof}[Proof of Theorem~\ref{thm: mixta para M}]
Since we can write
\[M^{\rho,\sigma}f(x)\leq \sup_{Q\in \mathcal{Q}_\rho, Q\ni x}\frac{1}{|Q|}\int_Q |f|+\sup_{Q\not\in\mathcal{Q}_\rho, Q\ni x}\left(\frac{\rho(x_Q)}{r_Q}\right)^\sigma\left(\frac{1}{|Q|}\int_Q |f|\right)=M_{\text{loc}}^\rho f(x)+M_{\text{glob}}^{\rho,\sigma}f(x),\]
by using Proposition~\ref{prop-cubrimientocritico} there exists a sequence of critical cubes $\{S_j\}_j$ such that
\begin{align*}
uv\left(\left\{x\in \mathbb{R}^d: \frac{M^{\rho,\sigma}(fv)(x)}{v(x)}>1\right\}\right)&\leq \sum_{j\in\mathbb{N}}uv\left(\left\{x\in S_j: \frac{M^{\rho,\sigma}(fv)(x)}{v(x)}>1\right\}\right)\\
&\leq \sum_{j\in\mathbb{N}}uv\left(\left\{x\in S_j: \frac{M_{\text{loc} }^{\rho}(fv)(x)}{v(x)}>\frac{1}{2}\right\}\right)\\
&\qquad +\sum_{j\in\mathbb{N}}uv\left(\left\{x\in S_j: \frac{M_{\text{glob}}^{\rho,\sigma}(fv)(x)}{v(x)}>\frac{1}{2}\right\}\right).
\end{align*}
Let us first estimate the term corresponding to $M^\rho_{\text{loc}}$. Fix a cube $S_j=Q(x_j, \rho(x_j))$. There exists a cube $P_j=Q(x_j,C_\rho\, \rho(x_j))$ containing $S_j$ such that $M_{\text{loc}}^\rho(fv)(x)\leq M_{P_j}(fv)(x)$, for $x\in S_j$  (see~\cite{BPQ}).
By Lemma~\ref{lem-Mloc a Mdiad} there exist $3^d$ cubes $R_{i,j}$ satisfying $P_j\subseteq R_{i,j}\subseteq \lambda P_j$ for every $1\leq i\leq 3^d$ such that 
\[M_{P_j}(fv)(x)\leq \sum_{i=1}^{3^d} M_{R_{i,j}}^{\mathscr{D}}(fv\mathcal{X}_{P_j})(x).\]
By applying \eqref{eq: estimacion en un dilatado de un cubo critico} with these cubes $R_{i,j}$ and $f\mathcal{X}_{P_j}$, we arrive to
\begin{align*}
			uv\left(\left\{x\in S_j:\frac{M^\rho_{\textup{loc}}(fv)(x)}{v(x)}>\frac{1}{2}\right\}\right) 
			& \leq  C
			\sum_{i=1}^{3^d}uv\left(\left\{x\in R_{i,j}:\frac{M^{\mathscr{D}}_{R_{i,j}}(fv\mathcal{X}_{P_j})(x)}{v(x)}>\frac{1}{3^d2}\right\}\right)\\
			&\leq C\int_{P_j}|f|uv.
\end{align*}
If we sum over $j$, by item~\eqref{item: prop-cubrimientocritico - item b} of Proposition~\ref{prop-cubrimientocritico} we get
\[\sum_{j\in \mathbb{N}} uv\left(\left\{x\in S_j:\frac{M^\rho_{\textup{loc}}(fv)(x)}{v(x)}>\frac{1}{2}\right\}\right) 
			 \leq  C\int_{\mathbb{R}^d}|f|uv.\]
Therefore we obtain the desired bound for $M_{\text{loc}}^\rho$. The estimate for $M_{\text{glob}}^{\rho,\sigma}$ is straightforward and does not depend on the properties of the weight $v$.  Observe that, by~\eqref{eq: constantesRho}, there exist two positive constants $C$ and $c$ depending only on $\rho$ such that for $x\in S_j$ and $Q\ni x$, $Q\notin \mathcal{Q}_\rho$, there exists $k\in\mathbb{N}$ such that $2^{k-1}S_j\subseteq Q\subseteq 2^k S_j$ and $
 \rho(x_Q)/r_Q\leq C 2^{-ck}$. Therefore
	\begin{equation*}
	\begin{split}
	M^{\rho,\sigma}_{\textup{glob}}(fv)(x) & \leq C \sup_{k\geq 1} \frac{2^{-c k\sigma}}{|2^k S_j|}\int_{2^kS_j}  |f| v
	\\ & \leq C \sum_{k\geq 1} \frac{2^{-k(c\sigma-\theta)}}{u(2^kS_j)} \int_{2^kS_j} |f| u v  
	\\ & \leq \frac{C}{u(S_j)} \sum_{k\geq 1} 2^{-k(c\sigma-\theta)} \int_{2^kS_j} |f| u v = \frac{A_j}{u(S_j)}.
	\end{split}
	\end{equation*}
	
	Now, applying part~\eqref{item: prop-cubrimientocritico - item b} of Proposition~\ref{prop-cubrimientocritico}, 
	
	\begin{equation*}
	\begin{split}
	\sum_{j\in\mathbb{N}} uv  \left(\left\{x\in S_j:\frac{M^{\rho,\sigma}_{\textup{glob}}(fv)(x)}{v(x)}>\frac{1}{2}\right\}\right)
	 \leq & \sum_{j\in\mathbb{N}} uv  \left(\left\{x\in S_j:\frac{A_j}{u(S_j)v(x)}>\frac{1}{2}\right\}\right)
	\\ & \leq C \sum_{j\in\mathbb{N}} \frac{A_j}{u(S_j)}\int_{S_j}  u(x)\,dx 
	\\ & \leq C \sum_{j\in\mathbb{N}} \sum_{k\in\mathbb{N}}
	2^{-k(c\sigma-\theta)}  \int_{2^{k}S_j} |f| u v
	\\ & \leq C \sum_{k\in\mathbb{N}} 2^{-k(c\sigma-\theta)} \int_{\mathbb{R}^d}
	\left(\sum_{j\in\mathbb{N}}\mathcal{X}_{2^{k}S_j}(y)\right)
	|f(y)|u(y)v(y)\,dy
	\\ &   \leq C \int_{\mathbb{R}^d} |f(y)| u(y)v(y),
	\end{split}
	\end{equation*}
	choosing $\sigma>(N_1+\theta + 1)/c$. This completes the proof of the mixed estimate for $M^{\rho,\sigma}$.
	
	We proceed with the proof of \eqref{eq: estimacion en un dilatado de un cubo critico}. Recall that, by means of the previous calculations, it will be enough to show that
\[\sum_{k\geq k_0} uv(E_k)\leq C\int_R|f|uv.\]
Since $E_k\subseteq \Omega_k$ for every $k$, we proceed as follows
\begin{align*}
    \sum_{k\geq k_0} uv(E_k)&= \sum_{k\geq k_0} uv(E_k\cap \Omega_k)\\
    &=\sum_{k\geq k_0}\sum_j uv(E_k\cap Q_j^k)\\
    &\leq \sum_{k\geq k_0}\sum_{\ell\geq 0} \sum_{Q_j^k\in \Gamma_{\ell,k}}a^{k+1}u(E_k\cap Q_j^k)+ \sum_{k\geq k_0} \sum_{i: Q_{j,i}^k\in \Gamma_{-1,k}}a^{k+1}u\left(Q_{j,i}^k\right)\\
    &= I+II.
\end{align*}
We shall prove that there exists a positive constant $C$ such that
\begin{equation}\label{eq: acotacion de I+II por lado derecho}
    I+II\leq C\int_R |f|uv.
\end{equation}
Let us start with the estimate of $I$. For $\ell\geq 0$, we set $\mathscr{S}_\ell=\bigcup_{k\geq k_0}\Gamma_{\ell,k}$. We build the sequence of \emph{principal cubes} in $\mathscr{S}_\ell$ as follows
\[P_0^\ell=\{Q: Q \text{ is maximal in } \mathscr{S}_\ell \text{ in the sense of inclusion}\}\]
and, for $m\geq 0$ we say that $Q_j^k$ belongs to $P_{m+1}^\ell$ if there exists a cube $Q_s^t\in P_m^\ell$ such that
\begin{equation}\label{eq: thm: mixta para M - cubos principales (1)}
    \frac{1}{|Q_j^k|}\int_{Q_j^k}u>\frac{2}{|Q_s^t|}\int_{Q_s^t}u
\end{equation}
and this cube is maximal in this sense, that is
\begin{equation}\label{eq: thm: mixta para M - cubos principales (2)}
    \frac{1}{|Q_{j'}^{k'}|}\int_{Q_{j'}^{k'}}u\leq\frac{2}{|Q_s^t|}\int_{Q_s^t}u,
\end{equation}
for every $Q_j^k\subsetneq Q_{j'}^{k'}\subsetneq Q_s^t$. Take $P^\ell=\bigcup_{m\geq 0}P_m^\ell$, the set of principal cubes in $\mathscr{S}_\ell$. By applying Lemma~\ref{lema: control exponencial de u de un subconjunto del cubo} together with the definition of $\Lambda_{\ell,k}$ we get
\begin{align*}
    I&=\sum_{k\geq k_0}\sum_{\ell\geq 0} \sum_{Q_j^k\in \Gamma_{\ell,k}}a^{k+1}u(E_k\cap Q_j^k)\\
    &\leq \sum_{k\geq k_0}\sum_{\ell\geq 0} \sum_{Q_j^k\in \Gamma_{\ell,k}} a^{k+1}c_1e^{-c_2\ell}u(Q_j^k)\\
    &\leq \sum_{\ell\geq 0}c_1e^{-c_2\ell}a^{1-\ell}\sum_{k\geq k_0}\sum_{Q_j^k\in \Gamma_{\ell,k}}\frac{v(Q_j^k)}{|Q_j^k|}u(Q_j^k).
\end{align*}
 In order to compute the double inner sum, let us define
 \[\mathscr{A}_{(s,t)}^\ell=\left\{Q_j^k\in \bigcup_{k\geq k_0}\Gamma_{\ell,k}: Q_j^k\subseteq Q_s^t \text{ and } Q_s^t \text{ is the smallest principal cube in $P^\ell$ that contains it}.\right\}\]
 From this definition, every $Q_j^k\in \mathscr{A}_{(s,t)}^\ell$ is not principal, unless $Q_j^k=Q_s^t$. Recall that $v\in A_\infty^\rho$. This implies that $v\in A_\infty (R)$, then there exist two positive constants $C$ and $\varepsilon$ such that
 \begin{equation}\label{eq: thm: mixta para M - condicion Ainfty de v en R}
   \frac{v(E)}{v(Q)}\leq C\left(\frac{|E|}{|Q|}\right)^\varepsilon,  
 \end{equation}
 for every cube $Q\subseteq R$ and every measurable subset $E$ of $Q$.
 By combining \eqref{eq: thm: mixta para M - cubos principales (2)} with \eqref{eq: thm: mixta para M - condicion Ainfty de v en R} and Lemma~\ref{lema: sparsity de los cubos}, for every $\ell\geq 0$, we get 
 \begin{align*}
     \sum_{k\geq k_0}\sum_{Q_j^k\in \Gamma_{\ell,k}}\frac{v(Q_j^k)}{|Q_j^k|}u(Q_j^k)&=\sum_{Q_s^t\in P^\ell}\sum_{(k,j): Q_j^k\in \mathscr{A}_{(s,t)}^\ell}\frac{u(Q_j^k)}{|Q_j^k|}v(Q_j^k)\\
     &\leq 2\sum_{Q_s^t\in P^\ell}\frac{u(Q_s^t)}{|Q_s^t|}\sum_{(k,j): Q_j^k\in \mathscr{A}_{(s,t)}^\ell}v(Q_j^k).
     \end{align*}

By virtue of Lemma~\ref{lema: sparsity de los cubos} and the definition of the family $\Lambda_{\ell,k}$, notice that
\[\sum_{j: Q_j^k\in \mathscr{A}_{(s,t)}^\ell}|Q_j^k|\leq \sum_{j: Q_j^k\in \mathscr{A}_{(s,t)}^\ell} a^{-(k+\ell)}v(Q_j^k)\leq  a^{-(k+\ell)}v(Q_s^t)\leq a^{1+t-k}|Q_s^t|.\]
Since $v\in A_\infty$, we get
\begin{align*}
\sum_{(k,j): Q_j^k\in \mathscr{A}_{(s,t)}^\ell}v(Q_j^k)\leq \sum_{k\geq t} \sum_{j: Q_j^k\in \mathscr{A}_{(s,t)}^\ell} v(Q_j^k)
&\leq \sum_{k\geq t} v\left(\bigcup_{j: Q_j^k\in \mathscr{A}_{(s,t)}^\ell} Q_j^k\right)\\
&\leq C\sum_{k\geq t} v(Q_s^t) \left(\frac{\left|\bigcup_{j: Q_j^k\in \mathscr{A}_{(s,t)}^\ell} Q_j^k\right|}{|Q_s^t|}\right)^{\varepsilon}\\
& \leq C v(Q_s^t)\sum_{k\geq t} a^{(t-k)\varepsilon},
\end{align*}
so we arrive to
\[\sum_{k\geq k_0}\sum_{Q_j^k\in \Gamma_{\ell,k}}\frac{v(Q_j^k)}{|Q_j^k|}u(Q_j^k)\leq C\sum_{Q_s^t\in P^{\ell}} \frac{u(Q_s^t)}{|Q_s^t|}v(Q_s^t).\]

 Then, by \eqref{eq: promedio de g como a^k}, we have that
 \begin{align*}
    I&\leq  C\sum_{\ell\geq 0}e^{-c_2\ell}a^{-\ell}\sum_{Q_s^t\in P^\ell} \frac{u(Q_s^t)}{|Q_s^t|}v(Q_s^t)\\
    &\leq  C\sum_{\ell\geq 0}e^{-c_2\ell}\sum_{Q_s^t\in P^\ell} a^t u(Q_s^t)\\
    &\leq C\sum_{\ell\geq 0}e^{-c_2\ell}\sum_{Q_s^t\in P^\ell} \frac{u(Q_s^t)}{|Q_s^t|}\int_{Q_s^t}g\\
    &\leq C\sum_{\ell\geq 0}e^{-c_2\ell} \int_R |f(x)|v(x)\left(\sum_{Q_s^t\in P^\ell}\frac{u(Q_s^t)}{|Q_s^t|}\mathcal{X}_{Q_s^t}(x)\right)\,dx\\
    &\leq C\int_R |f(x)|h_1(x)v(x)\,dx.
 \end{align*}
 
 \begin{afirmacion}\label{afirmacion: h1 menor o igual que C u}
 There exists a positive constant $C$, independent of $\ell$, such that $h_1(x)\leq Cu(x)$, for almost every $x$.
 \end{afirmacion}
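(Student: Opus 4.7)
The plan is to exploit the doubling property built into the definition of principal cubes in $P^\ell$ to reduce, at each fixed $x$, the inner sum $\sum_{Q_s^t\in P^\ell,\,x\in Q_s^t}\frac{u(Q_s^t)}{|Q_s^t|}$ to a geometric series, convert the resulting bound into one in terms of $u(x)$ by exploiting the local $A_1$ character of $u$ on the fixed dilation $R$ of a critical cube, and finally absorb the outer factor $\sum_{\ell\geq 0}e^{-c_2\ell}$ into a constant.

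First I would fix $x\in R$ and describe the principal cubes of $P^\ell$ that contain $x$. By the dyadic maximality encoded in \eqref{eq: thm: mixta para M - cubos principales (2)}, the cubes of each single level $P_m^\ell$ are pairwise disjoint, so at most one of them contains $x$; and by construction every element of $P_{m+1}^\ell$ is strictly contained in an element of $P_m^\ell$. Hence the principal cubes of $P^\ell$ through $x$ form a (possibly finite) nested chain $Q^{(0)}\supsetneq Q^{(1)}\supsetneq\cdots$ with $Q^{(m)}\in P_m^\ell$.

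Next, the doubling condition \eqref{eq: thm: mixta para M - cubos principales (1)} gives
\[\frac{u(Q^{(m+1)})}{|Q^{(m+1)}|}>2\,\frac{u(Q^{(m)})}{|Q^{(m)}|},\]
so iterating downward from the deepest cube of the chain bounds the series by twice its largest term:
\[\sum_{Q_s^t\in P^\ell,\,x\in Q_s^t}\frac{u(Q_s^t)}{|Q_s^t|}\leq 2\sup_m\frac{u(Q^{(m)})}{|Q^{(m)}|}\leq 2\,M_R^{\mathscr{D}}u(x).\]
Summing against $e^{-c_2\ell}$ in $\ell$ produces an extra geometric factor, so it remains to verify that $M_R^{\mathscr{D}}u(x)\leq C u(x)$ for a.e.\ $x\in R$, with $C$ independent of $\ell$.

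For that last step I would pick $\theta\geq 0$ with $u\in A_1^{\rho,\theta}$ and observe that, since $R$ is a fixed dilation of a critical cube, \eqref{eq: constantesRho} forces the quotient $r_Q/\rho(x_Q)$ to be uniformly bounded for every subcube $Q\subseteq R$; the perturbation factor $(1+r_Q/\rho(x_Q))^\theta$ is therefore a constant depending only on $R$ and $\rho$. Combining this with \eqref{eq: clase A1,rho,theta} yields the classical $A_1$-type inequality $|Q|^{-1}\int_Q u\leq C\inf_Q u$ for all such $Q$, and Lebesgue differentiation then gives the desired pointwise bound $M_R^{\mathscr{D}}u(x)\leq Cu(x)$ a.e., completing the proof of the claim. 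The only non-routine point, which is really a bookkeeping observation rather than a genuine difficulty, is justifying the pairwise disjointness of the cubes at each level $P_m^\ell$; once this is in place, both the geometric summation and the passage from the $A_1^\rho$ condition to a classical local $A_1$ estimate on $R$ are standard.
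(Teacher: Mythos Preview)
Your argument is correct and follows essentially the same route as the paper: you identify the nested chain of principal cubes through $x$, use the doubling condition \eqref{eq: thm: mixta para M - cubos principales (1)} to reduce the sum to a geometric series dominated by the largest average, and then invoke the $A_1^{\rho,\theta}$ condition on subcubes of $R$ (where the factor $(1+r_Q/\rho(x_Q))^\theta$ is uniformly bounded) to pass to $u(x)$. The paper does the same, phrasing the last step in terms of the smallest cube $Q^{(J)}$ of the chain rather than $M_R^{\mathscr D}u(x)$, and first verifying explicitly that the chain is finite; in your version finiteness is implicit, since the geometric growth of the averages together with $M_R^{\mathscr D}u(x)\le Cu(x)<\infty$ for a.e.\ $x$ forces the chain to terminate.
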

 By using this claim, we immediately have the desired bound for $I$.
 
 We now proceed to estimate $II$. Fix $0<\delta<\varepsilon$, where $\varepsilon$ is the constant appearing in \eqref{eq: thm: mixta para M - condicion Ainfty de v en R}. We shall perform the selection of principal cubes in $\mathscr{S}_{-1}=\bigcup_{k\geq k_0}\Gamma_{-1,k}$. Proceeding similarly as before, we take
 \[P_0^{-1}=\{Q: Q \text{ is maximal in }\mathscr{S}_{-1} \text{ in the sense of inclusion}\}\]
 and recursively, for $m\geq 0$, we say that $Q_{j,i}^k\in P_{m+1}^{-1}$ if there exists a cube $Q_{s,l}^t\in P_m^{-1}$ such that
 \begin{equation}\label{eq: thm: mixta para M - cubos principales -1 (1)}
    \frac{1}{|Q_{j,i}^k|}\int_{Q_{j,i}^k}u>\frac{a^{(k-t)\delta}}{|Q_{s,l}^t|}\int_{Q_{s,l}^t}u
\end{equation}
being maximal in this sense, that is
\begin{equation}\label{eq: thm: mixta para M - cubos principales -1 (2)}
    \frac{1}{|Q_{j',i'}^{k'}|}\int_{Q_{j',i'}^{k'}}u\leq\frac{a^{(k-t)\delta}}{|Q_{s,l}^t|}\int_{Q_{s,l}^t}u,
\end{equation}
for every $Q_{j,i}^k\subsetneq Q_{j',i'}^{k'}\subsetneq Q_{s,l}^t$. Let $P^{-1}=\bigcup_{m\geq 0}P_m^{-1}$, the set of principal cubes in $\mathscr{S}_{-1}$. We also define for this case the sets
\[\mathscr{A}_{(s,l,t)}^{-1}=\left\{Q_{j,i}^k\in \bigcup_{k\geq k_0}\Gamma_{-1,k}: Q_{j,i}^k\subseteq Q_{s,l}^t \text{ and } Q_{s,l}^t \text{ is the smallest cube in }P^{-1} \text{ that contains it}\right\}.\]
Then, by using \eqref{eq: promedio de v en cubos -1 como a^k} and \eqref{eq: thm: mixta para M - cubos principales -1 (2)}, we estimate $II$ as follows
\begin{align*}
    II&=\sum_{k\geq k_0}\, \sum_{i: Q_{j,i}^k\in \Gamma_{-1,k}}a^{k+1}u(Q_{j,i}^k)\\
    &\leq a\sum_{Q_{s,l}^t\in P^{-1}}\, \sum_{i,j,k: Q_{j,i}^k\in \mathscr{A}_{(s,l,t)}^{-1}}\frac{v(Q_{j,i}^k)}{|Q_{j,i}^k|}u(Q_{j,i}^k)\\
    &\leq a\sum_{Q_{s,l}^t\in P^{-1}} \frac{u(Q_{s,l}^t)}{|Q_{s,l}^t|}\sum_{k\geq t}a^{(k-t)\delta}\sum_{j,i: Q_{j,i}^k\in \mathscr{A}_{(s,l,t)}^{-1}}v(Q_{j,i}^k).
\end{align*}
For $k\geq t$, by \eqref{eq: promedio de v en cubos -1 como a^k}, we have that
\[\sum_{j,i: Q_{j,i}^k\in \mathscr{A}_{(s,l,t)}^{-1}}|Q_{j,i}^k|<\sum_{j,i: Q_{j,i}^k\in \mathscr{A}_{(s,l,t)}^{-1}}a^{-k}v(Q_{j,i}^k)\leq a^{-k}v(Q_{s,l}^t)\leq 2^da^{t-k}|Q_{s,l}^t|.\]
Then by virtue of \eqref{eq: thm: mixta para M - condicion Ainfty de v en R} we get
\[\sum_{j,i: Q_{j,i}^k\in \mathscr{A}_{(s,l,t)}^{-1}}v(Q_{j,i}^k) \leq C v(Q_{s,l}^t)\left(\frac{\left|\bigcup_{j,i: Q_{j,i}^k\in \mathscr{A}_{(s,l,t)}^{-1}}Q_{j,i}^k\right|}{|Q_s^t|}\right)^{\varepsilon}
  \leq Ca^{(t-k)\varepsilon}v(Q_{s,l}^t).\]
  Therefore, by \eqref{eq: promedio de v en cubos -1 como a^k},  \eqref{eq: promedio de g como a^k} and the fact that $0<\delta<\varepsilon$
\begin{align*}
  II&\leq C\sum_{Q_{s,l}^t\in P^{-1}} v(Q_{s,l}^t)\frac{u(Q_{s,l}^t)}{|Q_{s,l}^t|}\sum_{k\geq t}a^{(t-k)(\varepsilon-\delta)}\\
  &\leq C\sum_{Q_{s,l}^t\in P^{-1}} v(Q_{s,l}^t)\frac{u(Q_{s,l}^t)}{|Q_{s,l}^t|}\\
  &\leq C\sum_{Q_{s,l}^t\in P^{-1}} a^t u(Q_{s,l}^t)\\
  &\leq C \sum_{Q_{s,l}^t\in P^{-1}} \frac{u(Q_{s,l}^t)}{|Q_s^t|}\int_{Q_s^t}g\\
  &\leq C\int_R|f(x)|v(x)\left(\sum_{Q_{s,l}^t\in P^{-1}} \frac{u(Q_{s,l}^t)}{|Q_s^t|}\mathcal{X}_{Q_s^t}(x)\right)\,dx\\
  &=C\int_R|f(x)|h_2(x)v(x)\,dx.
\end{align*}

\begin{afirmacion}\label{afirmacion: h2 menor o igual que C u}
There exists a positive constant $C$ such that $h_2(x)\leq Cu(x)$ for almost every $x$.
\end{afirmacion}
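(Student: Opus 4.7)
The proof imitates the principal-cube argument used for Claim~\ref{afirmacion: h1 menor o igual que C u}, with the constant factor $2$ there replaced by the geometric factor $a^{\delta}$ supplied by the selection rule \eqref{eq: thm: mixta para M - cubos principales -1 (1)}. The first ingredient is geometric growth of $u$-averages along the tree $P^{-1}$. Given $Q_{j,i}^k\in P^{-1}_{m+1}$ with principal parent $Q_{s,l}^t\in P^{-1}_m$, the plan is to show $k>t$ strictly. Indeed, if $k=t$ then the ambient CZ cubes $Q_j^k$ and $Q_s^t$ both lie at level $k$ and both meet the nonempty set $Q_{j,i}^k$, forcing $Q_j^k=Q_s^t$; but then $Q_{j,i}^k\subsetneq Q_{s,l}^t$ would be two strictly nested CZ subcubes at the same level in the same ambient, contradicting the maximality built into Lemma~\ref{lema: descomposicion de CZ en un cubo fijo}. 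If instead $k<t$, then $Q_{s,l}^t$ already satisfies the CZ condition at level $a^k$ inside $Q_j^k$, so it must sit inside some CZ subcube $Q_{j,i'}^k$, yielding $Q_{j,i}^k\subsetneq Q_{j,i'}^k$, again contradicting CZ-maximality. Hence $k-t\geq 1$ and \eqref{eq: thm: mixta para M - cubos principales -1 (1)} gives
\[
\frac{u(Q_{j,i}^k)}{|Q_{j,i}^k|}>a^{\delta}\,\frac{u(Q_{s,l}^t)}{|Q_{s,l}^t|}.
\]

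Next, fix a Lebesgue point $x\in R$. Since $R$ is a fixed dilate of a critical cube, by \eqref{eq: constantesRho} the factor $(1+r_Q/\rho(x_Q))^\theta$ appearing in \eqref{eq: clase A1,rho,theta} is uniformly bounded for cubes $Q\subseteq R$, so $u$ satisfies the classical $A_1$ condition on subcubes of $R$, and in particular $Mu(x)\le C u(x)$ for a.e.\ $x\in R$. Combined with the geometric growth just established, this forces the chain of principal cubes in $P^{-1}$ containing $x$ to be a.e.\ finite: if the chain were infinite, the averages $u(Q)/|Q|$ would blow up geometrically, contradicting $Mu(x)<\infty$. Along this finite chain the averages form a geometric sequence with ratio $\geq a^\delta$, whose maximal entry is bounded by $Cu(x)$.

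To extract from this the desired bound, the plan is to reorganize $h_2(x)$ by ambient CZ cube: since different-$l$ subcubes are pairwise disjoint,
\[
h_2(x)=\sum_{(s,t):\,x\in Q_s^t}\frac{u(B_{s,t})}{|Q_s^t|}\leq \sum_{(s,t):\,x\in Q_s^t,\,B_{s,t}\neq\emptyset}\frac{u(Q_s^t)}{|Q_s^t|},
\]
where $B_{s,t}=\bigcup_{l:\,Q_{s,l}^t\in P^{-1}}Q_{s,l}^t$. The ambient cubes $Q_s^t$ relevant here form a nested chain (CZ cubes containing $x$ are totally ordered by inclusion), and each may be associated, via the principal-cube tree structure, with a level in $x$'s principal chain; telescoping across this arrangement produces a geometric series of ratio $a^{-\delta}$ with leading term $\le Cu(x)$. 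The main obstacle is precisely this matching step: handling those ambients $Q_s^t\ni x$ whose principal subcubes in $P^{-1}$ miss $x$. These are absorbed into the principal-chain estimate by combining Lemma~\ref{lema: sparsity de los cubos} with the $A_\infty^\rho$ character of $u$ on $R$ (which localizes to classical $A_\infty$), using that $|B_{s,t}|/|Q_s^t|$ is uniformly bounded away from $1$ thanks to $Q_s^t\in\Lambda_{-1,t}$, so that their aggregated contribution only multiplies the principal-chain bound by a constant. This yields $h_2(x)\le Cu(x)$, completing the claim.
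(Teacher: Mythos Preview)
Your argument has a real gap at the ``matching step'' in the last paragraph, and the earlier reorganization already discards the information needed to close it.

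The difficulty with $h_2$ is precisely that the indicator is $\mathcal{X}_{Q_s^t}$, not $\mathcal{X}_{Q_{s,l}^t}$: the principal cubes $Q_{s,l}^t$ contributing to $h_2(x)$ need not contain $x$ at all, so the chain of principal cubes \emph{through $x$} may be empty while $h_2(x)$ is still large. Your bound $u(B_{s,t})\le u(Q_s^t)$, which turns $h_2(x)$ into $\sum_{t\in G'} u(Q^t)/|Q^t|$, throws away the only leverage you have: nothing in your setup forces this last sum over $t$ to be finite, let alone bounded by $Cu(x)$. The claim that $|B_{s,t}|/|Q_s^t|$ is ``uniformly bounded away from $1$'' is not justified (from $Q_s^t\in\Lambda_{-1,t}$ one only gets $|B_{s,t}|<|Q_s^t|$, with no uniform gap), and even if it were, combining this with Lemma~\ref{lema: sparsity de los cubos} and $A_\infty$ of $u$ would not produce the geometric decay in $t$ you need; Lemma~\ref{lema: sparsity de los cubos} concerns Lebesgue measure of unions of cubes in $\Gamma$, not $u$-measure of the principal subcubes $B_{s,t}$.

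What the paper actually does is different and more delicate. It builds a \emph{new} stopping-time sequence $\{t_m\}$ on the ambient cubes $Q^t\ni x$ (using the doubling rule $u(Q^{t_{m+1}})/|Q^{t_{m+1}}|>2\,u(Q^{t_m})/|Q^{t_m}|$), and then, for each $t$ with $t_m\le t<t_{m+1}$, uses the principal selection rule \eqref{eq: thm: mixta para M - cubos principales -1 (1)}--\eqref{eq: thm: mixta para M - cubos principales -1 (2)} to show that every point $y\in Q_{s,l}^t$ (for $Q_{s,l}^t\in P^{-1}$ with ambient $Q^t$) satisfies $u(y)\gtrsim a^{(t-t_m)\delta}\,u(Q^t)/|Q^t|$. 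This places $\bigcup_l Q_{s,l}^t$ inside a superlevel set of $u$, and the $A_\infty$ property of $u$ then gives $\sum_l u(Q_{s,l}^t)\le C\,a^{(t_m-t)\delta\nu}u(Q^t)$, which is exactly the geometric decay in $t$ needed to sum. Your proposal does not contain this mechanism; the ``absorption'' you describe would have to reproduce it, and as written it does not.
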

This claim yields the desired estimate for $II$. Then we are done with $M_{\text{loc}}^\rho$.
\end{proof}

\medskip

We now give the proof of the claims. 

\begin{proof}[Proof of Claim~\ref{afirmacion: h1 menor o igual que C u}]
Fix $\ell\geq 0$ and $x\in R$ such that $u(x)<\infty$. Let us consider a sequence of nested principal cubes in $P^\ell$ that contain $x$. Let $Q^{(0)}$ be the maximal cube in $P^\ell$ that contains $x$. If we have chosen $j$ maximal principal cubes $Q^{(j)}$, $j\geq 0$, then we denote with $Q^{(j+1)}$ the maximal principal cube contained in $Q^{(j)}$ and that contains $x$. This sequence can only have a finite number of terms, since otherwise we would have
\[\frac{1}{|Q^{(0)}|}\int_{Q^{(0)}}u\leq \frac{2^{-j}}{|Q^{(j)}|}\int_{Q^{(j)}}u\leq C\frac{[u]_{A_1^\rho}}{2^j}u(x)\]
which would also yield
\[\frac{2^j}{|Q^{(0)}|}\int_{Q^{(0)}}u\leq C[u]_{A_1^\rho}u(x),\]
for every $j$. By letting $j\to\infty$ we would arrive to a contradiction. Then $x$ belongs to a finite number $J=J(x)$ of these cubes, and then
\begin{align*}
    h_1(x)&=\sum_{Q_s^t\in P^\ell}\frac{u(Q_s^t)}{|Q_s^t|}\mathcal{X}_{Q_s^t}(x)\\
    &\leq \sum_{j=0}^J\frac{1}{|Q^{(j)}|}\int_{Q^{(j)}}u\\
    &\leq \sum_{j=0}^{J}\frac{2^{j-J}}{|Q^{(J)}|}\int_{Q^{(J)}}u\\
    &\leq 2^{\theta}[u]_{A_1^{\rho,\theta}} u(x)\sum_{j=0}^J 2^{j-J}\\
    &\leq 2^{\theta}[u]_{A_1^{\rho,\theta}} u(x)2^{-J}(2^{J+1}-1)\\
    &\leq 2^{1+\theta}[u]_{A_1^{\rho,\theta}} u(x).\qedhere
\end{align*}
\end{proof}

\medskip

\begin{proof}[Proof of Claim~\ref{afirmacion: h2 menor o igual que C u}]
Fix $x\in R$ and assume again that $u(x)<\infty$. For every level $t$ there exists at most one cube on the decomposition of $\Omega_t$, $Q_s^t$, that contains $x$. If this is the case, we directly denote this cube as $Q^t$. Let $G=\{t: x\in Q^t\}$ and recall that $t\geq k_0$, so the set $G$ is bounded from below. We can also assume that $G$ is nonempty, since we would get $h_2(x)=0$ otherwise, and the desired estimate is trivially true. Then $G$ has a minimum $t_0$. This allows us to define a sequence in $G$ recursively: if $t_m$ has been chosen, with $m\geq 0$, we pick $t_{m+1}$ as the smallest element in $G$ that is greater than $t_m$ and verifies
\begin{equation}\label{eq: desigualdad elementos en G (1)}
    \frac{1}{|Q^{t_{m+1}}|}\int_{Q^{t_{m+1}}}u>\frac{2}{|Q^{t_m}|}\int_{Q^{t_m}}u.
\end{equation}
Then, if $t\in G$ and $t_m\leq t<t_{m+1}$ we have
\begin{equation}\label{eq: desigualdad elementos en G (2)}
    \frac{1}{|Q^{t}|}\int_{Q^{t}}u\leq \frac{2}{|Q^{t_m}|}\int_{Q^{t_m}}u.
\end{equation}
This sequence has only a finite number of terms. This can be seen similarly as we did in the proof of Claim~\ref{afirmacion: h1 menor o igual que C u}. Thus we have $\{t_m\}=\{t_m\}_{m=0}^M$. By denoting $\mathscr{F}_m=\{t\in G: t_m\leq t<t_{m+1}\}$, by \eqref{eq: desigualdad elementos en G (2)} we can write
\[h_2(x)=\sum_{Q_{s,l}^t\in P^{-1}}\frac{u(Q_{s,l}^t)}{|Q_s^t|}\mathcal{X}_{Q_s^t}(x)\leq \sum_{m=0}^M \left(\frac{2}{|Q^{t_m}|}\int_{Q^{t_m}}u\right)\sum_{t\in\mathscr{F}_m}\,\sum_{s,l: Q_{s,l}^t\in P^{-1}}\frac{u(Q_{s,l}^t)}{u(Q^t)}.\]
If we prove that there exists a positive constant $C$ such that
\begin{equation}\label{eq: thm: mixta para M, suma doble de cociente de u acotado por C}
  \sum_{t\in\mathscr{F}_m}\,\sum_{s,l: Q_{s,l}^t\in P^{-1}}\frac{u(Q_{s,l}^t)}{u(Q^t)}\leq C  
\end{equation}
we are done. Indeed, in this case we have
\begin{align*}
    h_2(x)&\leq \sum_{m=0}^{M}\frac{2C}{|Q^{t_m}|}\int_{Q^{t_m}}u\\
    &\leq C\sum_{m=0}^{M}2^{m-M}\frac{1}{|Q^{t_M}|}\int_{Q^{t_M}}u\\
    &\leq C[u]_{A_1^{\rho}}u(x)2^{-M}\sum_{m=0}^M2^m\\
    &\leq C[u]_{A_1^{\rho}}u(x).
\end{align*}
Let us prove that \eqref{eq: thm: mixta para M, suma doble de cociente de u acotado por C} holds. We fix $0\leq m\leq M$. If $t=t_0$, observe that
\[\sum_{s,l: Q_{s,l}^t\in P^{-1}}\frac{u(Q_{s,l}^t)}{u(Q^t)}\leq 1.\]
Now assume that $t_m<t<t_{m+1}$. Then if $Q_{s,l}^t\cap Q_{j,i}^{t_m}\neq \emptyset$, we must have $Q_{s,l}^t\subsetneq Q_{j,i}^{t_m}$. Let $Q_{j',i'}^{t'}$ be the smallest principal cube that contains $Q_{j,i}^{t_m}$. By combining \eqref{eq: thm: mixta para M - cubos principales -1 (1)} and \eqref{eq: thm: mixta para M - cubos principales -1 (2)} we have
\[\frac{1}{|Q_{s,l}^t|}\int_{Q_{s,l}^t}u>\frac{a^{(t-t')\delta}}{|Q_{j',i'}^{t'}|}\int_{Q_{j',i'}^{t'}}u\]
and
\[\frac{1}{|Q_{j,i}^{t_m}|}\int_{Q_{j,i}^{t_m}}u\leq \frac{a^{(t_m-t')\delta}}{|Q_{j',i'}^{t'}|}\int_{Q_{j',i'}^{t'}}u\]

Fixed $y\in Q_{s,l}^t$, by combining the estimates above with \eqref{eq: desigualdad elementos en G (2)} we get
\begin{align*}
    [u]_{A_1^{\rho}}u(y)&\geq  \frac{1}{|Q_{s,l}^t|}\int_{Q_{s,l}^t}u>\frac{a^{(t-t_m)\delta}}{|Q_{j,i}^{t_m}|}\int_{Q_{j,i}^{t_m}}u\\
    &>a^{(t-t_m)\delta}\inf_{Q_{j,i}^{t_m}}u
    \geq a^{(t-t_m)\delta}\inf_{Q^{t_m}}u\\
    &\geq \frac{a^{(t-t_m)\delta}}{[u]_{A_1^\rho}}\frac{1}{|Q^{t_m}|}\int_{Q^{t_m}}u\\
    &\geq \frac{a^{(t-t_m)\delta}}{2[u]_{A_1^\rho}}\frac{1}{|Q^{t}|}\int_{Q^{t}}u,
\end{align*}
which yields
\[u(y)\geq \frac{a^{(t-t_m)\delta}}{2[u]_{A_1^\rho}^2}\frac{1}{|Q^{t}|}\int_{Q^{t}}u:=\lambda,\]
for almost every $y\in Q_{s,l}^t$.

Since $u\in A_1^\rho\subseteq A_\infty^\rho$, there exist positive constants $C$ and $\nu$ such that
\[\frac{u(E)}{u(Q)}\leq C\left(\frac{|E|}{|Q|}\right)^\nu,\]
for every cube $Q\subseteq R$ and every measurable subset $E$ of $Q$. We can continue as follows
\begin{align*}
    \sum_{s,l: Q_{s,l}^t\in P^{-1}}u(Q_{s,l}^t)&\leq \frac{u(\{y\in Q^t: u(y)>\lambda\})}{u(Q^t)}u(Q^t)\\
    &\leq C\left(\frac{|\{y\in Q^t: u(y)>\lambda\}|}{|Q^t|}\right)^\nu u(Q^t)\\
    &\leq Cu(Q^t)\left(\frac{1}{\lambda |Q^t|}\int_{Q^t}u\right)^\nu\\
    &= Cu(Q^t)a^{(t_m-t)\delta\nu}.
\end{align*}
Recall that we have assumed $t_m<t<t_{m+1}$. For the case $t=t_m$ we have $Q_{s,l}^t=Q_{j,i}^{t_m}$ and the estimate follows similarly. Finally, we get
\[\sum_{t\in \mathscr{F}_m}\sum_{s,l: Q_{s,l}^t\in P^{-1}}\frac{u(Q_{s,l}^t)}{u(Q^t)}\leq C\sum_{t\geq t_m} a^{(t_m-t)\delta\nu}\leq C,\]
which proves \eqref{eq: thm: mixta para M, suma doble de cociente de u acotado por C}. This completes the proof of the claim.
\end{proof}

\section{Proof of Theorem~\ref{thm: extrapolacion} and its consequences}

We start this section by establishing some basic properties of $A_p^\rho$ weights. They will be required in order to prove the extrapolation given in Theorem~\ref{thm: extrapolacion}. For the first one see, for example, \cite{BHQ-twoweighted} or \cite{BHS-classesofweights}. We include a proof of the second one. Its corresponding classical version was established in \cite{CruzUribe-Martell-Perez}.

\begin{lema}\label{lema: propiedades clase Ap,rho}
	Let $1<p<\infty$. Then
	\begin{enumerate}[\rm(a)]
		\item \label{item: lema: propiedades clase Ap,rho - item a}if $u\in A_p^\rho$, then $u^{1-p'}\in A_{p'}^\rho$;
		\item \label{item: lema: propiedades clase Ap,rho - item b}if $u$ and $v$ belong to $A_1^\rho$, then $uv^{1-p}\in A_p^\rho$;
		\item \label{item: lema: propiedades clase Ap,rho - item c}if $w\in A_p^\rho$, there exist weights $u$ and $v$ in $A_1^\rho$ that verify $w=uv^{1-p}$.
	\end{enumerate}
\end{lema}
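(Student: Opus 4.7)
Part (a) is a purely algebraic observation: raising the defining $A_p^{\rho,\theta}$ inequality for $w$ to the power $p'$ and exploiting the identity $(1-p)(1-p')=1$, which gives $(w^{1-p'})^{1-p}=w$, produces precisely the $A_{p'}^{\rho}$ inequality for $w^{1-p'}$. Nothing about $\rho$ enters beyond carrying the factor $(1+r/\rho(x))^{\theta}$ through unchanged, so the argument is just a symbol-pushing imitation of the classical duality.

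For (b), my approach is a direct pointwise computation. The key identity is $w^{1-p'}=u^{1-p'}v$, since $(1-p)(1-p')=1$. Fix a cube $Q=Q(x,r)$. Since $v^{1-p}(y)\leq(\inf_Q v)^{1-p}$ and $u^{1-p'}(y)\leq(\inf_Q u)^{1-p'}$ pointwise on $Q$, the $A_p^{\rho,\theta}$ test quantity for $w=uv^{1-p}$ collapses, after using $(1-p')(p-1)=-1$, to
\[\frac{(\inf_Q v)^{1-p}}{\inf_Q u}\cdot\Bigl(\frac{1}{|Q|}\int_Q u\Bigr)\cdot\Bigl(\frac{1}{|Q|}\int_Q v\Bigr)^{p-1}.\]
Applying the $A_1^{\rho,\theta_i}$ conditions to the two averages cancels both infima and produces an extra factor $(1+r/\rho(x))^{\theta_1+\theta_2(p-1)}$, giving $uv^{1-p}\in A_p^{\rho,\theta_1+\theta_2(p-1)}$.

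For (c), the Jones-type factorization direction, I would adapt the Rubio de Francia scheme to the critical-radius setting. The backbone is the $L^p(w)$-boundedness of $M^{\rho,\sigma}$ (for $\sigma$ large enough) whenever $w\in A_p^\rho$, available from \cite{BHS-classesofweights}. Setting $\omega=w^{1-p'}\in A_{p'}^\rho$ by part (a), I would fix positive seeds $h_1\in L^p(w)$ and $h_2\in L^{p'}(\omega)$, and form the Rubio de Francia iterates
\[\mathcal{R}_i h_i=\sum_{k\geq 0}\frac{(M^{\rho,\sigma})^k h_i}{(2A_i)^k},\qquad A_1=\|M^{\rho,\sigma}\|_{L^p(w)},\quad A_2=\|M^{\rho,\sigma}\|_{L^{p'}(\omega)},\]
so that each satisfies the fixed-point bound $M^{\rho,\sigma}(\mathcal{R}_i h_i)\leq 2A_i\,\mathcal{R}_i h_i$. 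Following the CMP blueprint, the factorization $w=uv^{1-p}$ would then be assembled from $\mathcal{R}_1 h_1$, $\mathcal{R}_2 h_2$ and suitable powers of $w$.

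The main obstacle I anticipate in (c) is the $\theta$-bookkeeping. The fixed-point bound for $\mathcal{R}_i h_i$ is phrased against the $\sigma$-penalized maximal function, so it does not directly translate into an $A_1^{\rho,\theta'}$ estimate with finite $\theta'$: the hidden $(1+r/\rho(x))^{-\sigma}$ factor inside $M^{\rho,\sigma}$ must be absorbed into the admissible $\theta'$. I would handle this by separating subcritical and supercritical cubes via Proposition~\ref{prop-cubrimientocritico} — on $\mathcal{Q}_\rho$ the penalty is trivial, while for larger cubes the exponent $\sigma$ feeds directly into $\theta'$. Matching exponents in the final factorization is the most delicate point, and is exactly where the classical CMP argument needs to be modified to accommodate the $\rho$-perturbation.
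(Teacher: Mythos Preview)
The paper does not prove this lemma; it simply cites \cite{BHQ-twoweighted} and \cite{BHS-classesofweights}. So there is no proof in the paper to compare against, and your sketch has to be judged on its own merits.

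Parts (a) and (b) are fine: both are the classical arguments with the factor $(1+r/\rho(x))^{\theta}$ carried along, and your exponent bookkeeping in (b), ending with $uv^{1-p}\in A_p^{\rho,\theta_1+\theta_2(p-1)}$, is correct.

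For part (c), your Rubio de Francia scheme is the right idea, but the ``main obstacle'' you flag is not an obstacle at all. The fixed-point inequality $M^{\rho,\sigma}(\mathcal{R}_i h_i)\leq 2A_i\,\mathcal{R}_i h_i$ is \emph{literally} the statement that $\mathcal{R}_i h_i\in A_1^{\rho,\sigma}$ with constant at most $2A_i$: for any cube $Q=Q(x_0,r_0)$ and any $y\in Q$,
\[
\left(1+\frac{r_0}{\rho(x_0)}\right)^{-\sigma}\frac{1}{|Q|}\int_Q \mathcal{R}_i h_i \leq M^{\rho,\sigma}(\mathcal{R}_i h_i)(y)\leq 2A_i\,\mathcal{R}_i h_i(y),
\]
and taking the infimum over $y\in Q$ gives exactly \eqref{eq: clase A1,rho,theta} with $\theta=\sigma$. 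No splitting into subcritical and supercritical cubes via Proposition~\ref{prop-cubrimientocritico} is needed. The paper itself uses this implication verbatim in the proof of Theorem~\ref{thm: extrapolacion} (the line ``From the last inequality we get that $(\mathcal{R}_{\rho,\sigma}h)u\in A_1^{\rho,\sigma}$'').

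What is genuinely incomplete in your sketch is the assembly step: running two separate Rubio de Francia iterations on seeds $h_1,h_2$ does not by itself produce a factorization $w=uv^{1-p}$. The standard route (as in the cited references) iterates a \emph{single} operator, e.g.\ $Sf=w^{-1/p}M^{\rho,\sigma}(fw^{1/p})+w^{1/p'}M^{\rho,\sigma}(fw^{-1/p'})$ on a suitable space, to obtain one function $\phi$ with both $\phi\, w^{1/p}\in A_1^{\rho}$ and $\phi^{-1}w^{1/p'}\in A_1^{\rho}$, from which $u$ and $v$ are read off directly. You should make this step explicit rather than leaving it as ``suitable powers of $w$''.
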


\begin{lema}\label{lema: producto de un peso por potencia epsilon de otro en A_1}
Let $u\in A_1^\rho$ and $v\in A_p^\rho$, $1\leq p<\infty$. Then there exists $0<\varepsilon_0<1$ such that $uv^\varepsilon\in A_p^\rho$, for every $0<\varepsilon<\varepsilon_0$.
\end{lema}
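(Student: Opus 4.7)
The plan is to first handle the base case $p=1$ directly via Hölder, Jensen, and the reverse Hölder inequality available for $A_\infty^\rho$ weights, and then to reduce the general case $1<p<\infty$ to this base case through the Jones-type factorization stated in Lemma~\ref{lema: propiedades clase Ap,rho}.

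For the case $p=1$, I would argue as follows. Since $u\in A_1^\rho\subseteq A_\infty^\rho$, there exist $r>1$ and $\theta_0\geq 0$ such that $u\in \mathrm{RH}_r^{\rho,\theta_0}$. Choose $\varepsilon_0=1/r'$, so that for $0<\varepsilon<\varepsilon_0$ one has $\varepsilon r'<1$. Applying Hölder's inequality with exponents $r$ and $r'$ to the average of $uv^{\varepsilon}$, and then Jensen's inequality to the factor $v^{\varepsilon r'}$, one gets
\[\frac{1}{|Q|}\int_Q uv^{\varepsilon}\leq \Bigl(\frac{1}{|Q|}\int_Q u^{r}\Bigr)^{1/r}\Bigl(\frac{1}{|Q|}\int_Q v\Bigr)^{\varepsilon}.\]
The reverse Hölder inequality on $u$ followed by the $A_1^\rho$ conditions on both $u$ and $v$ would then yield
\[\frac{1}{|Q|}\int_Q uv^{\varepsilon}\leq C\Bigl(1+\tfrac{r_Q}{\rho(x_Q)}\Bigr)^{\theta}(\inf_Q u)(\inf_Q v)^{\varepsilon}=C\Bigl(1+\tfrac{r_Q}{\rho(x_Q)}\Bigr)^{\theta}\inf_Q(uv^{\varepsilon}),\]
for a suitable $\theta$ independent of $Q$. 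This proves $uv^{\varepsilon}\in A_1^{\rho,\theta}\subseteq A_1^\rho$.

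For $1<p<\infty$, I would apply Lemma~\ref{lema: propiedades clase Ap,rho}(\ref{item: lema: propiedades clase Ap,rho - item c}) to write $v=v_1 v_2^{1-p}$ with $v_1,v_2\in A_1^\rho$, so that
\[uv^{\varepsilon}=(uv_1^{\varepsilon})\,(v_2^{\varepsilon})^{1-p}.\]
The already-established case $p=1$ (applied with $v$ replaced by $v_1$) gives $uv_1^{\varepsilon}\in A_1^\rho$ whenever $\varepsilon$ is below the corresponding threshold. On the other hand, for $0<\varepsilon\leq 1$, Jensen's inequality directly yields
\[\frac{1}{|Q|}\int_Q v_2^{\varepsilon}\leq \Bigl(\frac{1}{|Q|}\int_Q v_2\Bigr)^{\varepsilon}\leq C\Bigl(1+\tfrac{r_Q}{\rho(x_Q)}\Bigr)^{\varepsilon\theta_2}\inf_Q v_2^{\varepsilon},\]
showing $v_2^{\varepsilon}\in A_1^\rho$. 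Taking $\varepsilon_0$ to be the minimum of these two thresholds and invoking Lemma~\ref{lema: propiedades clase Ap,rho}(\ref{item: lema: propiedades clase Ap,rho - item b}) on the factorization above, one concludes $uv^{\varepsilon}\in A_p^\rho$.

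I expect the only substantive step to be the case $p=1$; the rest is essentially bookkeeping through the factorization lemma. The mild obstacle within that step is choosing $\varepsilon_0$ small enough so that the average of $v^{\varepsilon r'}$ can be controlled by the $\varepsilon$-th power of the average of $v$ via Jensen, thereby avoiding any use of a reverse Hölder property of $v$ itself (which is convenient, since only the $A_1^\rho$ condition on $v$ is hypothesized here).
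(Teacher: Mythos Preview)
Your argument is correct. The $p=1$ case is essentially identical to the paper's: the paper chooses the H\"older exponent $s=(1/\varepsilon)'$ so that $\varepsilon s'=1$ exactly, while you fix $r$ from the reverse H\"older class of $u$ and use Jensen to handle $\varepsilon r'<1$; these are cosmetic variants of the same estimate.

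For $1<p<\infty$, however, your route genuinely differs from the paper's. You invoke the Jones-type factorization of Lemma~\ref{lema: propiedades clase Ap,rho}\eqref{item: lema: propiedades clase Ap,rho - item c} to write $v=v_1v_2^{1-p}$, reduce $uv_1^{\varepsilon}\in A_1^\rho$ to the already-proved case $p=1$, check $v_2^{\varepsilon}\in A_1^\rho$ by Jensen, and conclude via part~\eqref{item: lema: propiedades clase Ap,rho - item b}. The paper instead verifies the $A_p^{\rho}$ condition on $uv^{\varepsilon}$ directly: it applies H\"older with exponents $s,s'$ to each of the two averages in the $A_p^{\rho}$ inequality, uses $\mathrm{RH}_s^\rho$ and $A_1^\rho$ for $u$ on the $u$-factors and the $A_p^\rho$ condition for $v$ on the $v$-factors. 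Your approach is cleaner and makes the dependence of $\varepsilon_0$ on $u$ alone transparent (since the threshold $1/r'$ comes only from the reverse H\"older exponent of $u$), at the cost of importing the factorization lemma; the paper's computation is more self-contained for this particular statement and avoids that dependence.
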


\begin{proof}
Since $u\in A_1^\rho$, there exists a number $s_0>1$ such that $u\in \text{RH}_{s_0}^\rho$. Set $\varepsilon_0=1/s_0'$, fix $0<\varepsilon<\varepsilon_0$ and take $s=(1/\varepsilon)'$. Since $s<s_0$ we also have $u\in \text{RH}_s^\rho$. We shall first consider the case $p=1$. Given any cube $Q=Q(x,r)$ we write
\begin{align*}
    \frac{1}{|Q|}\int_Q uv^\varepsilon &\leq \left(\frac{1}{|Q|}\int_Q u^s\right)^{1/s}\left(\frac{1}{|Q|}\int_Q v^{\varepsilon s'}\right)^{1/s'}\\
    &\leq [u]_{\text{RH}_s^{\rho,\theta_1}}\left(1+\frac{r}{\rho(x)}\right)^{\theta_1}\frac{u(Q)}{|Q|}\left[[v]_{{A_1}^{\rho,\theta_2}}\left(\inf_Q v\right)\left(1+\frac{r}{\rho(x)}\right)^{\theta_2}\right]^{1/s'}\\
    &\leq [u]_{\text{RH}_s^{\rho,\theta_1}}[v]_{{A_1}^{\rho,\theta_2}}^{1/s'}[u]_{A_1^{\rho,\theta_3}}\left(\inf_Q{uv^{\varepsilon}}\right)\left(1+\frac{r}{\rho(x)}\right)^{\theta_1+\theta_2/s'+\theta_3},
\end{align*}
which implies that $uv^{\varepsilon}\in A_1^{\rho,\sigma}$, where $\sigma=\theta_1+\theta_2/s'+\theta_3$.

We now deal with the case $1<p<\infty$. Set $\varepsilon$ and $s$ as before. For $Q=Q(x,r)$ we may write
\begin{align*}
    \left(\frac{1}{|Q|}\int_Q uv^{\varepsilon}\right)^{1/p}\left(\frac{1}{|Q|}\int_Q (uv^{\varepsilon})^{1-p'}\right)^{1/p'}&\leq \left(\frac{1}{|Q|}\int_Q u^s\right)^{1/(ps)}\left(\frac{1}{|Q|}\int_Q v\right)^{1/(s'p)}\\
    &\qquad\times \left(\frac{1}{|Q|}\int_Q u^{s(1-p')}\right)^{1/(sp')}\left(\frac{1}{|Q|}\int_Q v^{1-p'}\right)^{1/(s'p')}\\
    &\leq [v]_{A_{p}^{\rho,\theta_1}}^{1/s'}[u]_{\text{RH}_s^{\rho,\theta_2}}^{1/p}\left(\frac{u(Q)}{|Q|}\right)^{1/p}\left(\frac{1}{|Q|}\int_Q u^{s(1-p')}\right)^{1/(sp')}\\
    &\qquad \times \left(1+\frac{r}{\rho(x)}\right)^{\theta_1/s'+\theta_2/p}\\
    &\leq [v]_{A_{p'}^{\rho,\theta_1}}^{1/s'}[u]_{\text{RH}_s^{\rho,\theta_2}}^{1/p}[u]_{A_1^{\rho,\theta_3}}^{1/p}\left(1+\frac{r}{\rho(x)}\right)^{\theta_1/s'+(\theta_2+\theta_3)/p},
\end{align*}
which yields $uv^{\varepsilon}\in A_p^\rho$.
\end{proof}

We are now in a position to give the proof of Theorem~\ref{thm: extrapolacion}.

\begin{proof}[Proof of Theorem~\ref{thm: extrapolacion}]
Since $u\in A_1^{\rho}$ and $v\in A_\infty^\rho$, there exist nonnegative numbers $\theta_1$ and $\theta_2$ such that $u\in A_1^{\rho,\theta_1}$ and $v\in A_\infty^{\rho,\theta_2}$. For $\sigma\geq \theta_1$ to be chosen later, we define the auxiliary operator
\[S_{\rho,\sigma}f(x)=\frac{M^{\rho,\sigma}(fu)(x)}{u(x)}.\]
Let us first prove that $S_{\rho,\sigma}$ is bounded on $L^{\infty}(uv)$. Indeed, since $d\mu(x)=u(x)v(x)\,dx$ is absolutely continuous with respect to the Lebesgue measure, it will be enough to show that $S_{\rho,\sigma}$ is bounded on $L^\infty$. Fix $x\in \mathbb{R}^d$ and $Q=Q(x,r)$. We have that
\begin{align*}
    \frac{1}{u(x)}\left(1+\frac{r}{\rho(x)}\right)^{-\sigma}\frac{1}{|Q|}\int_Q |f|u&\leq \frac{\|f\|_{\infty}}{u(x)}\left(1+\frac{r}{\rho(x)}\right)^{-\sigma}\frac{1}{|Q|}\int_Q u\\
    &\leq [u]_{A_1^{\rho,\theta_1}}\|f\|_\infty \left(1+\frac{r}{\rho(x)}\right)^{-\sigma+\theta_1}\frac{\inf_Q u}{u(x)}\\
    &\leq [u]_{A_1^{\rho,\theta_1}}\|f\|_\infty,
\end{align*}
since we have chosen $\sigma\geq \theta_1$. Thus we get that
$S_{\rho,\sigma}f(x)\leq [u]_{A_1^{\rho,\theta_1}}\|f\|_\infty$, for almost every $x$, by taking supremum over $Q$. Finally, we get the desired bound by taking supremum on $x$.

Let us now prove that there exists a number $1<p_0<\infty$ such that $S_{\rho,\sigma}$ is bounded on $L^{p_0}(uv)$. Observe that
\[\int_{\mathbb{R}^d}(S_{\rho,\sigma}f(x))^{p_0}u(x)v(x)\,dx=\int_{\mathbb{R}^d}(M^{\rho,\sigma}(fu)(x))^{p_0}u^{1-p_0}(x)v(x)\,dx.\]
We have that $u^{1-p_0}v$ is a weight in $A_{p_0}^\rho$. Indeed, since $v\in A_\infty^\rho$ there exists a number $t>1$ such that $v\in A_t^\rho$. By item~\eqref{item: lema: propiedades clase Ap,rho - item c} in Lemma~\ref{lema: propiedades clase Ap,rho} there exists weights $v_1$ and $v_2$ in $A_1^\rho$ such that $v=v_1v_2^{1-t}$. Therefore
\[u^{1-p_0}v=v_1u^{1-p_0}v_2^{1-t}=v_1\left(uv_2^{(1-t)/(1-p_0)}\right)^{1-p_0}.\]
If we can prove that the weight $uv_2^{(1-t)/(1-p_0)}$ is in $A_1^{\rho}$, we would have that $uv^{1-p_0}\in A_{p_0}^\rho$ by means of item~\eqref{item: lema: propiedades clase Ap,rho - item b} in Lemma~\ref{lema: propiedades clase Ap,rho}. By Lemma~\ref{lema: producto de un peso por potencia epsilon de otro en A_1}, there exists $\varepsilon_0$ such that $uv_2^{\varepsilon}\in A_1^{\rho}$, for every $0<\varepsilon<\varepsilon_0$. If we take $p_0=1+2(t-1)/\varepsilon$, then $(1-t)/(1-p_0)=\varepsilon/2$,
and we have that $uv_2^{(1-t)/(1-p_0)}\in A_1^\rho$. 

Since $uv^{1-p_0}\in A_{p_0}^\rho$, there exists $\theta_3\geq 0$ such that $M^{\rho,\theta_3}$ is bounded on $L^{p_0}(uv^{1-p_0})$. Therefore, if we take $\sigma\geq \max\{\theta_1,\theta_3\}$ we get
\begin{align*}
\int_{\mathbb{R}^d}(S_{\rho,\sigma}f(x))^{p_0}u(x)v(x)\,dx&=\int_{\mathbb{R}^d}(M^{\rho,\sigma}(fu)(x))^{p_0}u^{1-p_0}(x)v(x)\,dx\\
&\leq C_0\int_{\mathbb{R}^d}|f(x)|^{p_0}u(x)v(x)\,dx.
\end{align*}
By applying Proposition~\ref{propo: interpolacion en espacios de Lorentz con medida} we get that
\[\|S_{\rho,\sigma}f\|_{L^{q,1}(uv)}\leq 2^{1/q}\left(C_0\left(\frac{1}{p_0}-\frac{1}{q}\right)^{-1}+C_1\right)\|f\|_{L^{q,1}(\mu)},\]
for every $p_0<q<\infty$. Particularly, we have that
\begin{equation}\label{eq: thm: extrapolacion - eq1}
\|S_{\rho,\sigma}f\|_{L^{q,1}(uv)}\leq K_0\|f\|_{L^{q,1}(uv)}
\end{equation}
for every $2p_0\leq q<\infty$ and with $K_0=4p_0(C_0+C_1)$.

By following the Rubio de Francia algorithm we can define the auxiliary operator
\[\mathcal{R}_{\rho,\sigma}h(x)=\sum_{k=0}^\infty \frac{S_{\rho,\sigma}^kh(x)}{(2K_0)^k}.\]
Then it follows immediately that 
\begin{equation}\label{eq: thm: extrapolacion - eq2}
    h(x)\leq \mathcal{R}_{\rho,\sigma}h(x) \quad\text{ and }\quad S_{\rho,\sigma}(\mathcal{R}_{\rho,\sigma}h)(x)\leq 2K_0\mathcal{R}_{\rho,\sigma}h(x).
\end{equation}
From the last inequality we get that $(\mathcal{R}_{\rho,\sigma}h)u\in A_1^{\rho,\sigma}$ and $[(\mathcal{R}_{\rho,\sigma}h)u]_{A_1^{\rho,\sigma}}\leq 2K_0$. By Lemma~\ref{lema: producto de un peso por potencia epsilon de otro en A_1} there exists a positive constant $\tilde\varepsilon_0$ depending only on $[(\mathcal{R}_{\rho,\sigma}h)u]_{A_1^{\rho,\sigma}}$ (and therefore, only on $K_0$) such that $(\mathcal{R}_{\rho,\sigma}h)uv_1^{\varepsilon}\in A_1^{\rho}$, for every $0<\varepsilon<\tilde\varepsilon_0$. We fix $0<\varepsilon<\min\{\tilde\varepsilon_0, 1/(2p_0)\}$. If we take $r=(1/\varepsilon)'$, then $r'>2p_0$. By \eqref{eq: thm: extrapolacion - eq1} we obtain that $S_{\rho,\sigma}$ is bounded on $L^{r',1}(uv)$ with constant $K_0$. This yields
\begin{equation}\label{eq: thm: extrapolacion - eq3}
\|\mathcal{R}_{\rho,\sigma}h\|_{L^{r',1}(uv)}\leq 2\|h\|_{L^{r',1}(uv)}.
\end{equation}
Now observe that
\[(\mathcal{R}_{\rho,\sigma}h)uv^{1/r'}=\left[(\mathcal{R}_{\rho,\sigma}h)uv_1^\varepsilon\right]v_2^{(1-t)/r'}=\left[(\mathcal{R}_{\rho,\sigma}h)uv_1^\varepsilon\right]v_2^{1-\ell},\]
where $\ell=1/r+t/r'>1$. By item~\eqref{item: lema: propiedades clase Ap,rho - item b} in Lemma~\ref{lema: propiedades clase Ap,rho}, this weight belongs to $A_{\ell}^\rho$, and therefore to $A_\infty^{\rho}$.
Fix a pair $(f,g)\in \mathcal{F}$ such that the left-hand side of \eqref{eq: thm: extrapolacion - desigualdad de Coifman} is finite. By the duality between $L^{r',1}(uv)$ and $L^{r,\infty}(uv)$ we have
\[\left\|fv^{-1}\right\|_{L^{1,\infty}(uv)}^{1/r}=\left\|\left(fv^{-1}\right)^{1/r}\right\|_{L^{r,\infty}(uv)}=\sup \left|\int_{\mathbb{R}^d}f^{1/r}(x)h(x)u(x)v^{1/r'}(x)\,dx\right|,\]
where the supremum is taken over the functions $h\in L^{r',1}(uv)$ with $\|h\|_{L^{r',1}(uv)}=1$. We fix $h$ and apply the hypothesis with $p=1/r$ and $w=(\mathcal{R}_{\rho,\sigma}h)uv^{1/r'}\in A_\infty^{\rho}$. Let us check that the left-hand side of \eqref{eq: thm: extrapolacion - desigualdad de Coifman} is indeed finite. By applying Hölder inequality for Lorentz spaces and using \eqref{eq: thm: extrapolacion - eq3}, we have
\begin{align*}
\left|\int_{\mathbb{R}^d}f^{1/r}(x)\mathcal{R}_{\rho,\sigma}h(x)u(x)v^{1/r'}(x)\,dx\right|&\leq \left\|\left(fv^{-1}\right)^{1/r}\right\|_{L^{r,\infty}(uv)}\|\mathcal{R}_{\rho,\sigma}h\|_{L^{r',1}(uv)}\\
&\leq 2\left\|\left(fv^{-1}\right)^{1/r}\right\|_{L^{r,\infty}(uv)}\|h\|_{L^{r',1}(uv)}\\
&<\infty.
\end{align*}
Then by applying \eqref{eq: thm: extrapolacion - desigualdad de Coifman}, \eqref{eq: thm: extrapolacion - eq1} and \eqref{eq: thm: extrapolacion - eq3} we get
\begin{align*}
  \left|\int_{\mathbb{R}^d}f^{1/r}(x)h(x)u(x)v^{1/r'}(x)\,dx\right| 
  &\leq \int_{\mathbb{R}^d}f^{1/r}(x)\mathcal{R}_{\rho,\sigma}h(x)u(x)v^{1/r'}(x)\,dx\\
  &\leq C\int_{\mathbb{R}^d}g^{1/r}(x)\mathcal{R}_{\rho,\sigma}h(x)u(x)v^{1/r'}(x)\,dx\\
  &\leq  C\left\|\left(gv^{-1}\right)^{1/r}\right\|_{L^{r,\infty}(uv)}\|\mathcal{R}_{\rho,\sigma}h\|_{L^{r',1}(uv)}\\
  &\leq 2C\left\|\left(gv^{-1}\right)^{1/r}\right\|_{L^{r,\infty}(uv)}\|h\|_{L^{r',1}(uv)}\\
  &\leq 2C\left\|gv^{-1}\right\|_{L^{1,\infty}(uv)}^{1/r}.
\end{align*}
This completes the proof.
\end{proof}

We finish this section with the corresponding proof of Theorem~\ref{thm: mixta para SCZO}. As an auxiliary tool we shall require the following result, proved in \cite{BCH13}, which states a Coifman type estimate between SCZO and an adequate maximal operator. 

\begin{teo}\label{teo: tipo Coifman para T (BCH)}
Let $0<p<\infty$, $0<\delta\leq 1$ and $w\in A_\infty^{\rho,\text{loc}}$. If $T$ is a SCZO of $(\infty,\delta)$ type, then for every $\theta>0$ there exists a positive constant $C$ such that the inequality
\[\int_{\mathbb{R}^d} |Tf(x)|^pw(x)\,dx\leq C\int_{\mathbb{R}^d} (M^{\rho,\theta}f(x))^pw(x)\,dx\]
holds for $f\in L^1_{\text{loc}}$.
\end{teo}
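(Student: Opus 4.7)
The plan is to establish the Coifman-type inequality via a sharp-maximal / Fefferman--Stein argument adapted to the critical radius setting. I would introduce a sharp maximal operator $M^{\#,\rho,\eta}$ that on subcritical cubes $Q\in\mathcal{Q}_\rho$ measures mean oscillation in the usual way, and on supercritical cubes adds a boundary mean-value term multiplied by a growth factor $(1+r_Q/\rho(x_Q))^{\eta}$, in the spirit of the sharp maximal operators that are standard in the Schrödinger framework. The whole proof then splits into two steps: a pointwise bound relating $M^{\#,\rho,\eta}(Tf)$ to $M^{\rho,\theta}f$, and a weighted Fefferman--Stein inequality relating $\|Tf\|_{L^p(w)}$ to $\|M^{\#,\rho,\eta}(Tf)\|_{L^p(w)}$.

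The central pointwise step is the estimate
\[
M^{\#,\rho,\eta}(Tf)(x) \leq C\, M^{\rho,\theta} f(x),
\]
for an appropriate choice of $\eta$ depending on $\theta$. Fixing a cube $Q\ni x$, I would split $f=f_1+f_2$ with $f_1=f\chi_{\alpha Q}$ for a suitable dilation factor $\alpha$. If $Q$ is subcritical, Kolmogorov's inequality applied to the $L^1\to L^{1,\infty}$ bound~(I) of $T$ controls the oscillation of $Tf_1$ on $Q$ by a multiple of $|\alpha Q|^{-1}\int_{\alpha Q}|f|$. For $Tf_2$, the regularity estimate~\eqref{suav-puntual} combined with a dyadic annular decomposition of $(\alpha Q)^c$ yields a geometric series $\sum_k 2^{-k\delta}|2^kQ|^{-1}\int_{2^kQ}|f|$; both terms are dominated by $M^{\rho,\theta}f(x)$. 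If $Q$ is supercritical, the size bound~\eqref{TamPuntual} with $N$ sufficiently large is used to directly estimate a mean value of $|Tf|$ on $Q$: the decay $(1+|x-y|/\rho(x))^{-N}$ against $f$ is turned into a sum of averages on dyadic dilates, each one paid for by the growth factor $(1+r_Q/\rho(x_Q))^{-\sigma}$ built into $M^{\rho,\theta}$ in~\eqref{eq: operador maximal de H-L}.

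The second step is a Fefferman--Stein type inequality for $w\in A_\infty^{\rho,\mathrm{loc}}$: for functions $g$ with adequate decay (for $g=Tf$ this holds after a routine truncation argument using weak type $(1,1)$),
\[
\int_{\mathbb{R}^d}|g|^p w \leq C\int_{\mathbb{R}^d}\bigl(M^{\#,\rho,\eta}g\bigr)^p w.
\]
This is proved by covering $\mathbb{R}^d$ with the critical cubes of Proposition~\ref{prop-cubrimientocritico} and working separately on each one, where the hypothesis $w\in A_\infty^{\rho,\mathrm{loc}}$ provides the reverse Hölder and self-improvement properties needed to run the classical good-$\lambda$ Fefferman--Stein argument locally; the bounded overlap in item~\eqref{item: prop-cubrimientocritico - item b} takes care of assembling the local estimates into a global one. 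Combining the two steps delivers the stated inequality.

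The main obstacle is calibrating parameters in the supercritical case of the pointwise estimate. The kernel decay exponent $N$ in~\eqref{TamPuntual}, the growth exponent $\eta$ in $M^{\#,\rho,\eta}$, the parameter $\theta$ in $M^{\rho,\theta}$, and the constants $C_0, N_0$ from~\eqref{eq: constantesRho} all enter simultaneously, and one has to choose $N$ large depending on $\eta$, on $\theta$, and on the local $A_\infty^{\rho,\mathrm{loc}}$ constants of $w$. The arbitrariness of $N$ in~\eqref{TamPuntual} together with the universal quantification ``for every $\theta>0$'' in the statement is precisely what provides the flexibility needed to close the argument.
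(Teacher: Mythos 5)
The paper does not prove this theorem itself; it is quoted as a result established in \cite{BCH13}, and the title of that reference (``Lerner's inequality associated to a critical radius function and applications'') indicates the argument there proceeds via a Lerner-type local mean oscillation decomposition adapted to critical cubes, not through the sharp maximal / Fefferman--Stein machinery you describe. Your route is the other classical path to Coifman-type inequalities and is plausible in outline here: both the pointwise control of a Schr\"odinger-adapted sharp maximal of $Tf$ by $M^{\rho,\theta}f$, and a Fefferman--Stein inequality for $A_\infty^{\rho,\text{loc}}$ weights built from the critical covering of Proposition~\ref{prop-cubrimientocritico} with the bounded overlap of item~\eqref{item: prop-cubrimientocritico - item b}, have precedents in the Schr\"odinger literature. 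The trade-off is roughly the usual one: the Lerner decomposition delivers the full range $0<p<\infty$ directly once the local oscillation estimates are in hand and sidesteps a priori finiteness issues, whereas the sharp maximal route requires the truncation you mention for small $p$ and, since $T$ is only assumed of weak type $(1,1)$ in hypothesis (I), the oscillation of $Tf_1$ must be measured in $L^s$ for some $0<s<1$ via Kolmogorov's inequality rather than in $L^1$ as written --- a standard refinement, not a genuine obstruction. Your closing remark correctly identifies that the quantifier ``for each $N>0$'' in \eqref{TamPuntual} is precisely what matches the quantifier ``for every $\theta>0$'' in the conclusion, and is essential for closing the supercritical case.
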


\begin{proof}[Proof of Theorem~\ref{thm: mixta para SCZO}]
Since $A_\infty^\rho\subset A_\infty^{\rho,\text{loc}}$, by combining Theorem~\ref{thm: extrapolacion} with  Theorem~\ref{teo: tipo Coifman para T (BCH)} we get that
\[\left\|\frac{T(fv)}{v}\right\|_{L^{1,\infty}(uv)}\leq C\left\|\frac{M^{\rho,\theta}(fv)}{v}\right\|_{L^{1,\infty}(uv)},\]
for every $\theta>0$. On the other hand, by virtue of Theorem~\ref{thm: mixta para M} there exists $\sigma>0$ and $C>0$ such that
\[uv\left(\left\{x\in\mathbb{R}^d: \frac{M^{\rho,\sigma}(fv)(x)}{v(x)}>t\right\}\right)
\leq \frac{C}{t}\int_{\mathbb{R}^d}|f(x)|u(x)v(x)\,dx,\]
for every $t>0$. By taking $\theta=\sigma$, we have that
\begin{align*}
\sup_{t>0} t\,uv\left(\left\{x\in\mathbb{R}^d: \frac{|T(fv)(x)|}{v(x)}>t\right\}\right)&\leq C\,\sup_{t>0} t\,uv\left(\left\{x\in\mathbb{R}^d: \frac{M^{\rho,\sigma}(fv)(x)}{v(x)}>t\right\}\right)\\
&\leq C\int_{\mathbb{R}^d}|f(x)|u(x)v(x)\,dx,
\end{align*}
which yields the desired estimate.
\end{proof}

\section{Application to Schrödinger type singular integrals}\label{seccion: aplicaciones}

We finish the article by exhibiting mixed estimates for operators associated to the Schrödinger semigroup generated by $L= -\Delta + V$ on $\mathbb{R}^d$ with  $d\geq 3$, as a consequence of our main results for SCZO. The function $V$ will be called the potential and we shall assume that it is nonnegative, not identically zero and also that $V\in \mathrm{RH}_q$, with $q>d/2$.

In~\cite{shen}, Shen introduced the function 
\[\rho(x)= \sup\left\lbrace r>0: \frac{1}{r^{d-2}} \int_{B(x,r)}V(x)\,dx \leq 1 \right\rbrace.\]
and proved that $\rho$ is a critical radius function, under the above assumptions.

Our aim is to obtain mixed weak type inequalities for the Schrödinger Riesz transforms of first and second order  given respectively by $\mathcal{R}_1=\nabla L^{-1/2}$ and $\mathcal{R}_2=\nabla^2 L^{-1}$. We shall also consider the operators $L^{i\alpha}$ for $\alpha\in\mathbb{R}$, $T_\gamma= V^\gamma L^{-\gamma}$ for $0\leq 1$ and $S_\gamma = V^{\gamma}\nabla L^{-1/2-\gamma}$, for $0<\gamma\leq 1/2$.

The following proposition establishes some known properties for the above operators. 

\begin{propo}\label{prop-lista}
	Let $d\geq 3$, $q>d/2$ and $V\geq 0$ be a potential such that $V\in\mathrm{RH}_q$. 
	Then, for some $0<\delta\leq 1$, which may differ at each occurrence, we have:
	\begin{enumerate}[\rm(a)]
		\item\label{item-lista0}
		For $\alpha \in\mathbb{R}$ , $L^{i\alpha}$ is a SCZO of $(\infty,\delta)$ type.
		\item\label{item-lista1}
		$ \mathcal{R}_1$ is a SCZO of $(\infty,\delta)$ type provided $q\geq d$.
		
		\item\label{item-lista3}
		$\mathcal{R}_2$ is a SCZO of $(\infty,\delta)$ type, provided $V$ satisfies
    \begin{equation}\label{eq-condextraV}
        |V(x) - V(y)| \leq C \frac{|x-y|^{\delta}}{\rho^{2+\delta}(x)}, \quad \text{ for } |x-y|<\rho(x).
    \end{equation} 
		\item\label{item-lista4}
		For $0<\gamma\leq 1$, $T_\gamma$ is a SCZO of $(\infty,\delta)$  type, provided $V$ satisfies  \begin{equation}\label{eq-condextraV2}
        |V^\gamma(x) - V^\gamma(y)| \leq C \frac{|x-y|^{\delta}}{\rho^{2\gamma+\delta}(x)}, \quad \text{ for } |x-y|<\rho(x).
    \end{equation} 
		\item\label{item-lista5} For $0<\gamma\leq 1/2$,
		$S_\gamma$ is a SCZO of $(\infty,\delta)$, provided that $V$ satisfies~\eqref{eq-condextraV2}
	\end{enumerate}
\end{propo}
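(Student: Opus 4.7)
The plan is to verify, for each of the operators in items (a)--(e), the three defining properties of a SCZO of $(\infty,\delta)$ type: the weak $(1,1)$ bound (I), the kernel representation (II), and the size/smoothness pair \eqref{TamPuntual}--\eqref{suav-puntual} in (III). The common strategy is to express the operator as a time integral of the Schr\"odinger heat semigroup $e^{-tL}$, possibly composed with one or two spatial derivatives or with a power of $V$, and then to invoke the pointwise bounds on the heat kernel $p_t(x,y)$ of $L$ due to Dziuba\'nski--Zienkiewicz and Kurata. Those bounds give a Gaussian upper estimate for $p_t(x,y)$ decorated by a factor of the form $(1+\sqrt{t}/\rho(x)+\sqrt{t}/\rho(y))^{-N}$ valid for every $N>0$, which is precisely what produces the critical-radius decay factor in \eqref{TamPuntual}.

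Concretely, for item (a) I would represent $L^{i\alpha}$ via analytic continuation in $s$ of the identity $L^{-s}=\Gamma(s)^{-1}\int_0^\infty t^{s-1} e^{-tL}\,dt$; the weak $(1,1)$ bound is obtained from a Calder\'on--Zygmund decomposition adapted to critical cubes, and the kernel estimates follow from term-by-term time integration of the heat-kernel bounds. For items (b) and (c), the Riesz transforms, I would rely on Shen's pointwise estimates for $|\nabla_x \Gamma_L(x,y)|$ under $V\in\mathrm{RH}_d$, and for $|\nabla_x^2 \Gamma_L(x,y)|$ under the additional Hölder condition~\eqref{eq-condextraV}, where $\Gamma_L$ denotes the fundamental solution of $L$. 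Items (d) and (e) follow by the same template once~\eqref{eq-condextraV2} is used to handle the multiplication by $V^\gamma$, splitting the kernel difference $V^\gamma(x)K(x,y)-V^\gamma(x)K(x,y_0)$ plus an error in which $V^\gamma$ itself oscillates.

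The main technical difficulty is the verification of the Hölder smoothness condition \eqref{suav-puntual}, especially for $\mathcal{R}_2$, $T_\gamma$ and $S_\gamma$. For $\mathcal{R}_2$ the obstruction is that differentiating the fundamental solution twice brings $V$ into the estimate, which is precisely why \eqref{eq-condextraV} is unavoidable. For $T_\gamma$ and $S_\gamma$, the product rule splits the kernel difference into one piece where $V^\gamma(x)$ stays frozen and the underlying kernel oscillation is controlled by the previously established smoothness of $L^{-\gamma}$ or $\nabla L^{-1/2-\gamma}$, and a second piece where $V^\gamma$ itself oscillates on the scale $|y-y_0|$; the latter piece is exactly what \eqref{eq-condextraV2} is designed to tame. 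Since all of these ingredients already appear in the literature (Shen, Dziuba\'nski--Zienkiewicz, Bongioanni--Cabral--Harboure, and related work), the actual proof should reduce to a careful assembly of those bounds together with appropriate references, rather than any genuinely new estimate.
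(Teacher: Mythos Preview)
Your proposal is in the right spirit, and in fact goes well beyond what the paper itself does: the paper's ``proof'' of this proposition consists of two sentences, citing Theorems~0.4 and~0.8 of Shen for items~(a) and~(b), and Propositions~1,~2,~3 of Bongioanni--Harboure--Quijano (\emph{Behaviour of Schr\"odinger Riesz transforms over smoothness spaces}) for items~(c),~(d),~(e). No argument is reproduced. Your outline is essentially a summary of the strategy carried out in those references, so the two approaches coincide once you replace your sketch by the appropriate citations.

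One inaccuracy worth flagging: in your discussion of $T_\gamma$ and $S_\gamma$ you write that the kernel difference splits as ``$V^\gamma(x)K(x,y)-V^\gamma(x)K(x,y_0)$ plus an error in which $V^\gamma$ itself oscillates''. Since $T_\gamma f(x)=V^\gamma(x)\,L^{-\gamma}f(x)$ has kernel $V^\gamma(x)J_\gamma(x,y)$ with the multiplier acting in the $x$ variable, and the smoothness condition~\eqref{suav-puntual} concerns the \emph{second} variable only, there is no oscillation of $V^\gamma$ in that difference---$V^\gamma(x)$ is already frozen. The role of condition~\eqref{eq-condextraV2} is rather to secure the pointwise bound $V^\gamma(x)\lesssim\rho(x)^{-2\gamma}$ (combining the H\"older regularity of $V^\gamma$ with the critical-ball average $\rho(x)^{-d}\int_{B(x,\rho(x))}V\simeq\rho(x)^{-2}$), which is what one needs to convert the size and smoothness estimates of $J_\gamma$ into those required for $V^\gamma J_\gamma$. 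This does not affect the validity of your plan, only the bookkeeping of where the extra hypothesis enters.
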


\begin{proof}
    For~\eqref{item-lista0} and~\eqref{item-lista1} we refer to Theorem 0.4 and Theorem 0.8 in~\cite{shen}. To see~\eqref{item-lista3},~\eqref{item-lista4} and~\eqref{item-lista5} we refer to Proposition 1, Proposition 2 and Proposition 3 in~\cite{BHQ4}.
\end{proof}

As a consequence of this proposition combined with Theorem~\ref{thm: mixta para SCZO}, we obtain the following result. 

\begin{teo}
	Let $d\geq 3$, $q>d/2$ and $V\geq 0$ be a potential such that $V\in\mathrm{RH}_q$. Let $u\in A_1^\rho$ and $v\in A_\infty^\rho$. Then the following inequalities hold for every positive $t$.
	\begin{enumerate}[\rm(a)]
	    \item If $\alpha \in\mathbb{R}$,
	\[uv\left(\left\{x\in\mathbb{R}^d: \frac{| L^{i\alpha}(fv)(x)|}{v(x)}>t\right\}\right)\leq \frac{C}{t}\int_{\mathbb{R}^d}|f(x)|u(x)v(x)dx.\]
	 \item If $q >d$,
	\[uv\left(\left\{x\in\mathbb{R}^d: \frac{| \mathcal{R}_1(fv)(x)|}{v(x)}>t\right\}\right)\leq \frac{C}{t}\int_{\mathbb{R}^d}|f(x)|u(x)v(x)dx.\]
\item If $V$ satisfies~\eqref{eq-condextraV} for some $0<\delta\leq 1$,
	\[uv\left(\left\{x\in\mathbb{R}^d: \frac{| \mathcal{R}_2(fv)(x)|}{v(x)}>t\right\}\right)\leq \frac{C}{t}\int_{\mathbb{R}^d}|f(x)|u(x)v(x)dx.\]
 \item If $0<\gamma\leq 1$ and $V$ satisfies~\eqref{eq-condextraV2} for some $0<\delta\leq 1$,
	\[uv\left(\left\{x\in\mathbb{R}^d: \frac{| T_\gamma(fv)(x)|}{v(x)}>t\right\}\right)\leq \frac{C}{t}\int_{\mathbb{R}^d}|f(x)|u(x)v(x)dx.\]
 \item If $0<\gamma\leq 1/2$ and $V$ satisfies~\eqref{eq-condextraV2} for some $0<\delta\leq 1$,
	\[uv\left(\left\{x\in\mathbb{R}^d: \frac{| S_\gamma(fv)(x)|}{v(x)}>t\right\}\right)\leq \frac{C}{t}\int_{\mathbb{R}^d}|f(x)|u(x)v(x)dx.\]
 
\end{enumerate}
\end{teo}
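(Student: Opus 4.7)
The plan is essentially to observe that this theorem is a direct corollary of Theorem~\ref{thm: mixta para SCZO} combined with Proposition~\ref{prop-lista}. Since all of the heavy machinery has been set up in the previous sections, no new analytic technique is required: the content of the statement is that the operators $L^{i\alpha}$, $\mathcal{R}_1$, $\mathcal{R}_2$, $T_\gamma$ and $S_\gamma$ all fall under the umbrella of SCZO of $(\infty,\delta)$ type, under the hypotheses listed in each item.

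First I would fix $u\in A_1^\rho$ and $v\in A_\infty^\rho$, and treat each item in turn. For item (a), part~\eqref{item-lista0} of Proposition~\ref{prop-lista} says that, for any $\alpha\in\mathbb{R}$, the imaginary power $L^{i\alpha}$ is a SCZO of $(\infty,\delta)$ type for some $0<\delta\leq 1$; then a direct application of Theorem~\ref{thm: mixta para SCZO} yields the mixed weak inequality for $L^{i\alpha}$. Item (b) proceeds identically, using part~\eqref{item-lista1} of Proposition~\ref{prop-lista}, whose hypothesis $q\geq d$ on the reverse-Hölder exponent of $V$ is exactly the assumption made in this item.

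For items (c), (d) and (e), the additional regularity conditions~\eqref{eq-condextraV} and~\eqref{eq-condextraV2} on the potential $V$ are precisely those required by parts~\eqref{item-lista3},~\eqref{item-lista4} and~\eqref{item-lista5} of Proposition~\ref{prop-lista} in order to guarantee that $\mathcal{R}_2$, $T_\gamma$ and $S_\gamma$ (for the stated range of $\gamma$) are SCZO of $(\infty,\delta)$ type. Each of these hypotheses matches verbatim the assumptions of the corresponding part of Proposition~\ref{prop-lista}, so once more Theorem~\ref{thm: mixta para SCZO} gives the desired inequality.

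There is no real obstacle to overcome here, since the mixed weak estimate has already been proved in full generality for every SCZO of $(\infty,\delta)$ type in Theorem~\ref{thm: mixta para SCZO}, and the identification of the five operators above as SCZO of this type is already recorded in Proposition~\ref{prop-lista}. The only thing to keep track of is that the constants $C$ appearing in each inequality depend on the weights $u$, $v$, the potential $V$, and the parameters $\alpha$, $\gamma$ and $\delta$ at hand, but not on $f$ or $t$. Thus the proof amounts to writing \emph{By Proposition~\ref{prop-lista} together with Theorem~\ref{thm: mixta para SCZO}} for each of the five items.
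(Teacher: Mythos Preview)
Your proposal is correct and matches the paper's approach exactly: the paper does not even write out a proof for this theorem, simply stating that it follows ``as a consequence of this proposition combined with Theorem~\ref{thm: mixta para SCZO}.'' The only microscopic discrepancy is that item~(b) of the theorem assumes $q>d$ while Proposition~\ref{prop-lista}\eqref{item-lista1} only requires $q\geq d$, so rather than matching ``exactly'' the theorem's hypothesis is slightly stronger than needed---but this of course causes no problem.
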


As expected, in all of the above theorems we generalize the weighted weak $(1,1)$ type proved in Theorem 7.2 in~\cite{BCH3} for $L^{i\alpha}$ and $\mathcal{R}_1$, and easily derived from Theorem 3.6 in~\cite{BCH3} and Proposition~\ref{prop-lista} above  for $\mathcal{R}_2$, $T_\gamma$ and $S_\gamma$.

\section{Appendix}

Let $\mu$ be a measure that will be fixed throughout this section. Given a $\mu$-measurable function $f$, the \emph{distribution function} of $f$ is defined by $\lambda_f(s)=\mu(\{x\in\mathbb{R}^d: |f(x)|>s\})$. The \emph{decreasing rearrangement} $f^*$ is given by $f^*(t)=\inf\{s\geq 0: \lambda_f(s)\leq t\}$.

Some basic properties of the decreasing rearrangement function and its relation with the distribution function are included in the following proposition. A proof can be found in \cite{grafakos}.

\begin{propo}\label{propo: propiedades reordenada no creciente}
Given $f,g$ $\mu-$measurable functions and $0\leq s,t,t_1,t_2<\infty$ we have:
\begin{enumerate}[\rm(a)]
    \item \label{item: propo: propiedades reordenada no creciente - item a} $\lambda_f(f^*(t))\leq t$; 
    \item \label{item: propo: propiedades reordenada no creciente - item b}$f^*(t)>s$ if and only if $t<\lambda_f(s)$;
    \item \label{item: propo: propiedades reordenada no creciente - item c}$(f+g)^*(t_1+t_2)\leq f^*(t_1)+g^*(t_2)$;
    \item \label{item: propo: propiedades reordenada no creciente - item d} if $|f|\leq |g|$ almost everywhere, then $f^*\leq g^*$ and $|f|^*=f^*$;
    \item \label{item: propo: propiedades reordenada no creciente - item e} $f^*(0)=\|f\|_{L^\infty(\mu)}$.
\end{enumerate}
\end{propo}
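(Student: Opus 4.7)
The proof rests almost entirely on two standard facts about the distribution function: $\lambda_f$ is nonincreasing (since $\{|f|>s_1\}\supseteq\{|f|>s_2\}$ when $s_1\leq s_2$) and right-continuous in $s$ (by continuity of $\mu$ from above applied to the nested decreasing family $\{|f|>s_n\}$ as $s_n\downarrow s$, using that for finite-measure pieces $\mu$ is well-behaved). I would record these first as a preliminary lemma, since most parts use them implicitly. The definition of $f^*$ as the generalized inverse of $\lambda_f$ then makes each part essentially an exercise in manipulating infima of sets on the real line.

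For part \eqref{item: propo: propiedades reordenada no creciente - item a}, I pick a decreasing sequence $s_n\downarrow f^*(t)$ with $\lambda_f(s_n)\leq t$ (available from the defining infimum) and use right-continuity to get $\lambda_f(f^*(t))=\lim_n\lambda_f(s_n)\leq t$. For \eqref{item: propo: propiedades reordenada no creciente - item b} I argue both directions: if $f^*(t)>s$, then $s$ lies strictly below the infimum defining $f^*(t)$, so $s\notin\{r:\lambda_f(r)\leq t\}$, giving $\lambda_f(s)>t$; conversely, if $\lambda_f(s)>t$ then by monotonicity every $r\leq s$ has $\lambda_f(r)\geq\lambda_f(s)>t$, so $s$ is a strict lower bound for $\{r:\lambda_f(r)\leq t\}$, yielding $f^*(t)>s$.

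For part \eqref{item: propo: propiedades reordenada no creciente - item c} I exploit the pointwise triangle inequality: if $|f(x)|\leq f^*(t_1)$ and $|g(x)|\leq g^*(t_2)$, then $|(f+g)(x)|\leq f^*(t_1)+g^*(t_2)$. Taking contrapositives gives the set inclusion
\[
\{|f+g|>f^*(t_1)+g^*(t_2)\}\subseteq \{|f|>f^*(t_1)\}\cup\{|g|>g^*(t_2)\},
\]
whence subadditivity of $\mu$ and part \eqref{item: propo: propiedades reordenada no creciente - item a} yield $\lambda_{f+g}(f^*(t_1)+g^*(t_2))\leq t_1+t_2$, and the defining infimum then gives $(f+g)^*(t_1+t_2)\leq f^*(t_1)+g^*(t_2)$. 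Part \eqref{item: propo: propiedades reordenada no creciente - item d} follows because $|f|\leq|g|$ a.e.\ forces $\{|f|>s\}\subseteq\{|g|>s\}$ up to a null set, hence $\lambda_f\leq\lambda_g$ pointwise; this reverses to $\{r:\lambda_g(r)\leq t\}\subseteq\{r:\lambda_f(r)\leq t\}$, giving $f^*(t)\leq g^*(t)$. The identity $|f|^*=f^*$ is immediate from $\lambda_{|f|}=\lambda_f$. For part \eqref{item: propo: propiedades reordenada no creciente - item e}, the set $\{s\geq 0:\lambda_f(s)\leq 0\}=\{s\geq 0:\lambda_f(s)=0\}$ coincides with the essential upper bounds of $|f|$, whose infimum is by definition $\|f\|_{L^\infty(\mu)}$.

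The only genuinely subtle point is right-continuity of $\lambda_f$ in \eqref{item: propo: propiedades reordenada no creciente - item a}, which must be justified to ensure the infimum in the definition of $f^*$ is actually attained; without it the inequality $\lambda_f(f^*(t))\leq t$ could fail at jump points. Everything else is a careful bookkeeping exercise with set inclusions and monotone functions, and I would present the five parts in the order \eqref{item: propo: propiedades reordenada no creciente - item a}, \eqref{item: propo: propiedades reordenada no creciente - item b}, \eqref{item: propo: propiedades reordenada no creciente - item d}, \eqref{item: propo: propiedades reordenada no creciente - item c}, \eqref{item: propo: propiedades reordenada no creciente - item e}, since \eqref{item: propo: propiedades reordenada no creciente - item c} invokes \eqref{item: propo: propiedades reordenada no creciente - item a} and \eqref{item: propo: propiedades reordenada no creciente - item d} is self-contained.
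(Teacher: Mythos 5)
Your proof reproduces the standard argument for these facts, but the justification you single out as ``the only genuinely subtle point'' is stated incorrectly. Right-continuity of $\lambda_f$ does not follow from continuity of $\mu$ from above along a nested decreasing family: as $s_n\downarrow s$ the super-level sets $\{|f|>s_n\}$ form a nested \emph{increasing} family, with $\bigcup_n\{|f|>s_n\}=\{|f|>s\}$. The correct tool is continuity of $\mu$ from \emph{below}, which holds for every measure with no finiteness hypothesis. This is actually good news -- the ``finite-measure pieces'' caveat you insert is unnecessary and the argument is cleaner than you suggest -- but as written the stated reason does not support the claim, and a reader trying to invoke continuity from above (which genuinely requires $\mu(A_1)<\infty$) on those sets would be applying the wrong lemma to a family that is not even decreasing.

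There is also a small gap in the converse of part (b). From $\lambda_f(s)>t$ and monotonicity you correctly deduce that $s$ lies strictly below every element of $\{r:\lambda_f(r)\leq t\}$, but this alone only yields $f^*(t)\geq s$, not $f^*(t)>s$: for instance $0$ is strictly less than every element of $(0,\infty)$ yet equals its infimum. To close it, invoke part (a): $\lambda_f(f^*(t))\leq t$ shows $f^*(t)$ itself belongs to the set $\{r:\lambda_f(r)\leq t\}$, and since $s$ is strictly less than every element of that set, $s<f^*(t)$. You already order (a) before (b), so this is a one-line repair. Parts (c), (d), (e) are correct as written. For context, the paper itself gives no proof of this proposition and simply refers to Grafakos, so the comparison here is against the standard reference argument rather than anything internal to the paper.
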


Given $0<p,q\leq \infty$, we say that $f$ belongs to the \emph{Lorentz space} $L^{p,q}(\mu)$ if $\|f\|_{L^{p,q}(\mu)}<\infty$, where
\[\|f\|_{L^{p,q}(\mu)}=\left\{\begin{array}{ccl}
    \displaystyle\left(\int_0^\infty \left(t^{1/p}f^*(t)\right)^q\,\frac{dt}{t}\right)^{1/q} & if & q<\infty;\\
    \sup_{t>0} t^{1/p}f^*(t) & if & q=\infty.
\end{array}
\right.\]

The following proposition is an auxiliary tool in order to prove Theorem~\ref{thm: extrapolacion}, generalizing a result proved in \cite{CruzUribe-Martell-Perez}. We include a proof for the sake of completeness. 

\begin{propo}\label{propo: interpolacion en espacios de Lorentz con medida}
Let $1<p_0<\infty$ and $T$ be a sublinear operator that verifies
\[\|Tf\|_{L^{p_0,\infty}(\mu)}\leq C_0\|f\|_{L^{p_0,1}(\mu)} \quad\text{ and }\quad \|Tf\|_{L^{\infty}(\mu)}\leq C_1\|f\|_{L^{\infty}(\mu)}.\]
Then for every $p_0<p<\infty$ we have that
\[\|Tf\|_{L^{p,1}(\mu)}\leq 2^{1/p}\left(C_0\left(\frac{1}{p_0}-\frac{1}{p}\right)^{-1}+C_1\right)\|f\|_{L^{p,1}(\mu)}.\]
\end{propo}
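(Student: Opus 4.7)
[Proof proposal for Proposition~\ref{propo: interpolacion en espacios de Lorentz con medida}]
The plan is to run a Marcinkiewicz-type argument adapted to Lorentz scales, based on the standard level-set decomposition at height $f^{*}(t)$ and the subadditivity of the decreasing rearrangement recorded in item~\eqref{item: propo: propiedades reordenada no creciente - item c} of Proposition~\ref{propo: propiedades reordenada no creciente}. For each $t>0$ I would split
\[
f=f_{1}^{t}+f_{2}^{t},\qquad f_{1}^{t}=f\,\chi_{\{|f|>f^{*}(t)\}},\quad f_{2}^{t}=f\,\chi_{\{|f|\leq f^{*}(t)\}},
\]
so that $\|f_{2}^{t}\|_{L^{\infty}(\mu)}\leq f^{*}(t)$, whereas $(f_{1}^{t})^{*}(s)\leq f^{*}(s)$ for $s\leq t$ and $(f_{1}^{t})^{*}(s)=0$ for $s>t$, since $\lambda_{f_{1}^{t}}(f^{*}(t))\leq t$ (using item~\eqref{item: propo: propiedades reordenada no creciente - item a}).

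The key pointwise estimate on $(Tf)^{*}$ comes from sublinearity of $T$ and item~\eqref{item: propo: propiedades reordenada no creciente - item c}:
\[
(Tf)^{*}(2t)\leq (Tf_{1}^{t})^{*}(t)+(Tf_{2}^{t})^{*}(t).
\]
The second summand is controlled by the $L^{\infty}$ hypothesis, giving $(Tf_{2}^{t})^{*}(t)\leq C_{1}\,f^{*}(t)$. The first summand is controlled by the restricted weak-type hypothesis, namely
\[
(Tf_{1}^{t})^{*}(t)\leq C_{0}\,t^{-1/p_{0}}\|f_{1}^{t}\|_{L^{p_{0},1}(\mu)}\leq C_{0}\,t^{-1/p_{0}}\int_{0}^{t}s^{1/p_{0}-1}f^{*}(s)\,ds,
\]
where the last inequality uses the support bound on $(f_{1}^{t})^{*}$.

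Next I would plug this into the definition of $\|Tf\|_{L^{p,1}(\mu)}$, after the change of variables $t=2u$, which produces the factor $2^{1/p}$ announced in the statement:
\[
\|Tf\|_{L^{p,1}(\mu)}=2^{1/p}\int_{0}^{\infty}u^{1/p-1}(Tf)^{*}(2u)\,du.
\]
Applying the pointwise bound and Fubini to the resulting double integral, the key computation is
\[
\int_{0}^{\infty}u^{1/p-1/p_{0}-1}\!\!\int_{0}^{u}s^{1/p_{0}-1}f^{*}(s)\,ds\,du=\int_{0}^{\infty}s^{1/p_{0}-1}f^{*}(s)\!\int_{s}^{\infty}u^{1/p-1/p_{0}-1}du\,ds=\frac{\|f\|_{L^{p,1}(\mu)}}{1/p_{0}-1/p},
\]
which is exactly where the hypothesis $p>p_{0}$ is used (it makes the inner $u$-integral converge and produces the constant $(1/p_{0}-1/p)^{-1}$). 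The remaining contribution coming from the $L^{\infty}$ piece is the trivial identity $\int_{0}^{\infty}u^{1/p-1}f^{*}(u)\,du=\|f\|_{L^{p,1}(\mu)}$. Adding both pieces yields the advertised bound.

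I do not anticipate a genuine obstacle here; the whole argument is a bookkeeping exercise. The only delicate point is the exact constants: one must commit to the symmetric split $(Tf)^{*}(2t)\leq (Tf_{1}^{t})^{*}(t)+(Tf_{2}^{t})^{*}(t)$ and to the rescaling $t=2u$ in order to land on the factor $2^{1/p}$ (rather than $2^{1/p_{0}}$ or $2$, which would come out of other natural splits), and one has to be careful that the truncation level $f^{*}(t)$ is chosen precisely so that both $\|f_{2}^{t}\|_{\infty}$ and the support of $(f_{1}^{t})^{*}$ are simultaneously controlled in terms of $t$.
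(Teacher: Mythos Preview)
Your proposal is correct and follows essentially the same route as the paper: the paper also splits $f$ at the level $f^{*}(t)$, uses the subadditivity of the rearrangement (item~\eqref{item: propo: propiedades reordenada no creciente - item c}) together with a change of variables $t\mapsto 2t$ to produce the factor $2^{1/p}$, bounds the low part via the $L^{\infty}$ hypothesis and item~\eqref{item: propo: propiedades reordenada no creciente - item e}, bounds the high part via the weak $(p_{0},1)\to(p_{0},\infty)$ hypothesis, and finishes with the same Fubini computation yielding the constant $(1/p_{0}-1/p)^{-1}$. The only cosmetic differences are notational (the paper calls the low/high pieces $f_{t}$ and $f^{t}$), and the justification of $(f_{1}^{t})^{*}(s)=0$ for $s\geq t$ should read $\lambda_{f_{1}^{t}}(0)=\lambda_{f}(f^{*}(t))\leq t$ rather than $\lambda_{f_{1}^{t}}(f^{*}(t))\leq t$, but this does not affect the argument.
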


\begin{proof}
We fix $t>0$ and write $f=f_t+f^t=f\mathcal{X}_{\{x: |f(x)|\leq f^*(t)\}}+f\mathcal{X}_{\{x: |f(x)|> f^*(t)\}}$. By applying items~\eqref{item: propo: propiedades reordenada no creciente - item c} and~\eqref{item: propo: propiedades reordenada no creciente - item d} of Proposition~\ref{propo: propiedades reordenada no creciente} we have that
\begin{align*}
\|Tf\|_{L^{p,1}(\mu)}&=\int_0^\infty t^{1/p}(Tf)^*(t)\,\frac{dt}{t}\\
&\leq 2^{1/p}\left(\int_0^\infty t^{1/p}(Tf_t)^*(t)\,\frac{dt}{t}+\int_0^\infty t^{1/p}\left(Tf^t\right)^*(t)\,\frac{dt}{t}\right)\\
&=2^{1/p}(A+B).
\end{align*}
In order to estimate $A$, by item~\eqref{item: propo: propiedades reordenada no creciente - item e} in Proposition~\ref{propo: propiedades reordenada no creciente} we get
\[(Tf_t)^*(t)\leq (Tf_t)^*(0)=\|Tf_t\|_{L^{\infty}(\mu)}\leq C_1\|f_t\|_{L^{\infty}(\mu)}\leq C_1f^*(t).\]
This leads to
\[A\leq C_1\int_0^\infty t^{1/p}f^*(t)\,\frac{dt}{t}=C_1\|f\|_{L^{p,1}(\mu)}.\]
For $B$, we first notice that $|f^t(x)|\leq |f(x)|$. By item~\eqref{item: propo: propiedades reordenada no creciente - item d} in Proposition~\ref{propo: propiedades reordenada no creciente} this implies that $\left(f^t\right)^*(s)\leq f^*(s)$ for every $s$. Moreover, by item~\eqref{item: propo: propiedades reordenada no creciente - item a} in Proposition~\ref{propo: propiedades reordenada no creciente} we get
\[\lambda_{f^t}(0)=\lambda_f(f^*(t))\leq t\]
and item~\eqref{item: propo: propiedades reordenada no creciente - item b} in the same proposition yields $(f^t)^*(t)=0$. Therefore  $(f^t)^*(s)=0$ for every $s\geq t$. We also observe that
\begin{align*}
    t^{1/p}\left(Tf^t\right)^*(t)&=t^{1/p-1/p_0}t^{1/p_0}\left(Tf^t\right)^*(t)\\
    &\leq t^{1/p-1/p_0}\left(\sup_{t>0}t^{1/p_0}\left(Tf^t\right)^*(t)\right)\\
    &=t^{1/p-1/p_0}\left\|Tf^t\right\|_{L^{p_0,\infty}(\mu)}.
\end{align*}
By combining these estimates, we get
\begin{align*}
    B&\leq \int_0^\infty t^{1/p-1/p_0}\left\|Tf^t\right\|_{L^{p_0,\infty}(\mu)}\frac{dt}{t}\\
    &\leq C_0\int_0^\infty t^{1/p-1/p_0}\left\|f^t\right\|_{L^{p_0,1}(\mu)}\,\frac{dt}{t}\\
    &=C_0\int_0^\infty t^{1/p-1/p_0}\left(\int_0^t s^{1/p_0}f^*(s)\,\frac{ds}{s}\right)\,\frac{dt}{t}\\
    &\leq C_0 \int_0^\infty\left(\int_s^{\infty}t^{1/p-1/p_0}\,\frac{dt}{t}\right)s^{1/p_0}f^*(s)\,\frac{ds}{s}\\
    &=C_0\left(\frac{1}{p_0}-\frac{1}{p}\right)^{-1}\int_0^\infty s^{1/p}f^*(s)\,\frac{ds}{s}\\
    &=C_0\left(\frac{1}{p_0}-\frac{1}{p}\right)^{-1}\|f\|_{L^{p,1}(\mu)}.
\end{align*}

The estimates for $A$ and $B$ allow us to conclude the thesis.
\end{proof}

\section*{Declarations}

\subsection*{Ethical approval}
Not applicable.
\subsection*{Competing interests}
The authors have no relevant financial or non-financial interests to disclose.
\subsection*{Author's contributions}
 All authors whose names appear on the submission made substantial contributions to the conception and design of the work, drafted the work and revised it critically, approved this version to be published, and agree to be accountable for all aspects of the work in ensuring that questions related to the accuracy or integrity of any part of the work are appropriately investigated and resolved.
\subsection*{Funding}
The authors were supported by PICT 2019 (ANPCyT) and CAI+D 2020 (UNL)
\subsection*{Availability of data and materials}
Not applicable.

\def\cprime{$'$}
\providecommand{\bysame}{\leavevmode\hbox to3em{\hrulefill}\thinspace}
\providecommand{\MR}{\relax\ifhmode\unskip\space\fi MR }
\providecommand{\MRhref}[2]{%
  \href{http://www.ams.org/mathscinet-getitem?mr=#1}{#2}
}
\providecommand{\href}[2]{#2}

\end{document}